\DeclareRobustCommand{\intprod}{%
  \mathbin{\mathpalette\int@prod{(0.1,0)(0.9,0)(0.9,0.8)}}%
}
\DeclareRobustCommand{\intprodr}{%
  \mathbin{\mathpalette\int@prod{(0.1,0.8)(0.1,0)(0.9,0)}}}
\newcommand{\int@prod}[2]{%
  \begingroup
  \sbox\z@{$\m@th#1+$}%
  \setlength\unitlength{\wd\z@}%
  \begin{picture}(1,1)
  \roundcap
  \polyline#2
  \end{picture}%
  \endgroup
}
\newcommand{\C}{\ensuremath{\mathbb{C}}}
\newcommand{\R}{\ensuremath{\mathbb{R}}}
\newcommand{\sumprime}{\if@display\sideset{}{'}\sum%
            \else\sum'\fi}
\begin{document}

\numberwithin{equation}{section}

\newtheorem{theorem}{Theorem}[section]
\newtheorem{proposition}[theorem]{Proposition}
\newtheorem{conjecture}[theorem]{Conjecture}
\def\theconjecture{\unskip}
\newtheorem{corollary}[theorem]{Corollary}
\newtheorem{lemma}[theorem]{Lemma}
\newtheorem{observation}[theorem]{Observation}
\newtheorem{definition}{Definition}
\numberwithin{definition}{section} 
\newtheorem{remark}{Remark}
\def\theremark{\unskip}
\newtheorem{question}{Question}
\def\thequestion{\unskip}
\newtheorem{example}{Example}
\def\theexample{\unskip}
\newtheorem{problem}{Problem}

\def\vvv{\ensuremath{\mid\!\mid\!\mid}}
\def\intprod{\mathbin{\lr54}}
\def\reals{{\mathbb R}}
\def\integers{{\mathbb Z}}
\def\N{{\mathbb N}}
\def\complex{{\mathbb C}\/}
\def\dist{\operatorname{dist}\,}
\def\spec{\operatorname{spec}\,}
\def\interior{\operatorname{int}\,}
\def\trace{\operatorname{tr}\,}
\def\cl{\operatorname{cl}\,}
\def\essspec{\operatorname{esspec}\,}
\def\range{\operatorname{\mathcal R}\,}
\def\kernel{\operatorname{\mathcal N}\,}
\def\dom{\operatorname{Dom}\,}
\def\linearspan{\operatorname{span}\,}
\def\lip{\operatorname{Lip}\,}
\def\sgn{\operatorname{sgn}\,}
\def\Z{ {\mathbb Z} }
\def\e{\varepsilon}
\def\p{\partial}
\def\rp{{ ^{-1} }}
\def\Re{\operatorname{Re\,} }
\def\Im{\operatorname{Im\,} }
\def\dbarb{\bar\partial_b}
\def\eps{\varepsilon}
\def\O{\Omega}
\def\Lip{\operatorname{Lip\,}}

\def\Hs{{\mathcal H}}
\def\E{{\mathcal E}}
\def\scriptu{{\mathcal U}}
\def\scriptr{{\mathcal R}}
\def\scripta{{\mathcal A}}
\def\scriptc{{\mathcal C}}
\def\scriptd{{\mathcal D}}
\def\scripti{{\mathcal I}}
\def\scriptk{{\mathcal K}}
\def\scripth{{\mathcal H}}
\def\scriptm{{\mathcal M}}
\def\scriptn{{\mathcal N}}
\def\scripte{{\mathcal E}}
\def\scriptt{{\mathcal T}}
\def\scriptr{{\mathcal R}}
\def\scripts{{\mathcal S}}
\def\scriptb{{\mathcal B}}
\def\scriptf{{\mathcal F}}
\def\scriptg{{\mathcal G}}
\def\scriptl{{\mathcal L}}
\def\scripto{{\mathfrak o}}
\def\scriptv{{\mathcal V}}
\def\frakg{{\mathfrak g}}
\def\frakG{{\mathfrak G}}

\def\ov{\overline}
\def\suob{\;\tikz{\draw[line width=0.5pt](0, 0)--(8pt, 0)--(8pt, 8pt);}\;}

\title{Boundary behavior of the Szeg\"o kernel}
 
 \author[JuJie Wu]{JuJie Wu}
 \author[Xu Xing]{Xu Xing}

\address [Jujie Wu] {School of Mathematics (Zhuhai), Sun Yat-Sen University, Zhuhai, Guangdong 519082, P. R.
 	China}
\email{wujj86@mail.sysu.edu.cn}
 
\address[Xu Xing] {School of Mathematical Sciences, Fudan University, Shanghai, 20043, China} 
\email{16110180009@fudan.edu.cn}

\begin{abstract}
  We give a H\"ormander-type localization principle for the Szeg\"o kernel $S_\Omega(z)$. We also show that for each boundary point $z_0$, $S_\Omega(z)\gtrsim|z-z_0|^{-\frac{1}{3}}$ holds non-tangentially  for any bounded pseudoconvex domain with smooth boundary in ${\mathbb C}^2$.
  \bigskip
  
  \noindent{{\sc Mathematics Subject Classification} (2010): 32A25, 32A35, 32A40, 32T35.}
  
  \smallskip
  
  \noindent{{\sc Keywords}: Hardy space, Szeg\"o kernel, $\bar{\partial}_b$-operator, Boundary behavior} 
\end{abstract}


\thanks{The first author was supported by National Natural Science Foundation of China, No. 11601120.}

\thanks{The second author was supported by National Natural Science Foundation of China, No. 11771089}

\date{}
\maketitle
\section{Introduction}
Boundary behavior of the Bergman kernel has been studied by many authors (cf.\ \cite{Hormander65} \cite{Fefferman}, \cite{Ohsawa81},\ etc; for more information on this matter, we refer the reader to review article of Chen and Fu \cite{ChenFu2012}).  In particular, H\"ormander proved the following useful localization lemma for Bergman kernel.
\begin{theorem}[cf. \cite{Hormander65}]
  	Let $\Omega \subset\mathbb{C}^n$ be a bounded  pseudoconvex domain. Suppose that  $z^0\in\partial\Omega$ is a local holomorphic peak point and  $\Omega' \subset \Omega$ is a domain so that $\partial\Omega\cap\partial\Omega'$ contains a  neighborhood of $z^0$. Then we have
  	\begin{equation} 
  	\frac{K_\Omega (z) }{K_{\Omega'} (z)} \rightarrow 1, \ \ \ (z \rightarrow z^0)
  	\end{equation}
where $K_\Omega$ is the Bergman kernel of $\Omega$.
  \end{theorem}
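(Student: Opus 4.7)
My plan is to base the argument on the extremal characterization
\[
K_\Omega(z) \;=\; \sup\bigl\{\,|f(z)|^{2}\;:\; f\in\mathcal{O}(\Omega)\cap L^{2}(\Omega),\ \|f\|_{L^{2}(\Omega)}\le 1\bigr\},
\]
and its counterpart on $\Omega'$. Restricting any admissible $f$ on $\Omega$ to $\Omega'$ preserves its value at $z$ and does not increase its $L^{2}$-norm, giving $K_{\Omega}(z)\le K_{\Omega'}(z)$ for $z\in\Omega'$, and hence $\limsup_{z\to z^{0}}K_{\Omega}(z)/K_{\Omega'}(z)\le 1$. The content of the theorem is the matching lower bound, for which I would use the standard Hörmander cutoff-and-$\bar\partial$-correct construction. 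Let $g$ be the $L^{2}$-normalized extremal on $\Omega'$ at $z$, so that $|g(z)|^{2}=K_{\Omega'}(z)$ and $\|g\|_{L^{2}(\Omega')}=1$. By hypothesis there is a neighborhood $V$ of $z^{0}$ with $V\cap\Omega=V\cap\Omega'$; fix a cutoff $\chi\in C_{c}^{\infty}(V)$ equal to $1$ on a smaller neighborhood of $z^{0}$. Then $\chi g$ extended by zero is smooth on $\Omega$, and its $\bar\partial$-defect
\[
v \;:=\; \bar\partial(\chi g) \;=\; g\,\bar\partial\chi
\]
is supported in the fixed compact set $K:=\mathrm{supp}(\bar\partial\chi)\cap\Omega$, which is bounded away from $z^{0}$.

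If one can solve $\bar\partial u=v$ on the pseudoconvex $\Omega$ with $u(z)=0$ and $\|u\|_{L^{2}(\Omega)}\to 0$ as $z\to z^{0}$, then $f:=\chi g-u\in\mathcal{O}(\Omega)\cap L^{2}(\Omega)$ satisfies $f(z)=g(z)$, $\|f\|_{L^{2}(\Omega)}\le 1+\|u\|_{L^{2}(\Omega)}$, and
\[
\frac{K_{\Omega}(z)}{K_{\Omega'}(z)}\;\ge\;\frac{|f(z)|^{2}/\|f\|_{L^{2}(\Omega)}^{2}}{|g(z)|^{2}}\;\ge\;\frac{1}{(1+\|u\|_{L^{2}(\Omega)})^{2}}\;\longrightarrow\;1,
\]
which together with the easy direction gives the claim. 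The production of such a $u$ uses Hörmander's $L^{2}$-existence theorem for $\bar\partial$ on $\Omega$ with a plurisubharmonic weight $\varphi_{z}$ that depends on $z$ and is singular at $z^{0}$. The peak-point hypothesis supplies a local $h\in\mathcal{O}(V)$ with $h(z^{0})=1$ and $|h|<1$ on $\overline{\Omega\cap V}\setminus\{z^{0}\}$, so $\log|h-1|$ is plurisubharmonic on $\Omega\cap V$ with a $-\infty$ singularity at $z^{0}$ and is bounded below on $K$. A weight of the schematic form
\[
\varphi_{z}(w)\;=\;N\log|h(w)-1|\;+\;2n\log|w-z|\;+\;M\rho(w),
\]
with $\rho$ a psh exhaustion of $\Omega$ and the local datum $\log|h-1|$ glued to a globally psh function on $\Omega$ via the regularized-maximum construction, makes the non-integrability of $|w-z|^{-2n}$ force $u(z)=0$, while the blow-up of $|h-1|^{-N}$ concentrates the weighted mass of $u$ near $z^{0}$.

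The main obstacle is turning this weighted estimate into the global bound $\|u\|_{L^{2}(\Omega)}\to 0$. The weight effortlessly controls $u$ near $z^{0}$, but the global $L^{2}$-norm also collects contributions from the bulk of $\Omega$, where the weight is merely bounded; to beat this one needs the driving data $v=g\,\bar\partial\chi$ to be itself small, i.e.\ $\int_{K}|g|^{2}\,dV\to 0$. This concentration of the extremal is a consequence of the peak-point hypothesis: it forces $K_{\Omega'}(z)\to\infty$, and the mean-value inequality applied to the holomorphic $g$ then forces an $L^{2}$-normalized function with $|g(z)|^{2}=K_{\Omega'}(z)\to\infty$ to concentrate its mass in a shrinking neighborhood of $z$, leaving $\int_{K}|g|^{2}\to 0$. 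Combining this smallness of $v$ with the Hörmander estimate yields $\|u\|_{L^{2}(\Omega)}\to 0$, and the proof concludes by squeezing the ratio between $1$ and $(1+\|u\|_{L^{2}(\Omega)})^{-2}$. The delicate part is thus the weight engineering together with the global psh patching and the concentration argument for $g$; once these are in place, the remainder is a routine combination of Hörmander's $L^{2}$-theorem with the extremal formulation of the Bergman kernel.
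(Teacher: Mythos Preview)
The paper does not prove this statement; it is quoted from H\"ormander as motivation. However, the paper's proof of the Szeg\"o analogue (Theorem~\ref{th: localization}, Section~3) is explicitly modeled on H\"ormander's original argument, so one can read off from Step~1 there how the Bergman--kernel proof actually goes.

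Your scheme diverges from H\"ormander's at the decisive point, and the divergence creates a genuine gap. H\"ormander does \emph{not} feed the bare data $v=g\,\bar\partial\chi$ into the $\bar\partial$--solver. He first multiplies the extremal by a high power of the peak function and works with $\chi\,g\,h^{m}$, so that the $\bar\partial$--data is $(\bar\partial\chi)\,g\,h^{m}$. Since $|h|\le\theta<1$ on $\mathrm{supp}\,\bar\partial\chi$, one gets
\[
\|(\bar\partial\chi)\,g\,h^{m}\|_{L^{2}(\Omega)}\le C\,\theta^{m}\|g\|_{L^{2}(\Omega')}=C\,\theta^{m}
\]
for free, with no information whatsoever about where the mass of $g$ sits. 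A single unweighted (or $e^{-|z|^{2}}$--weighted) application of H\"ormander's $L^{2}$ theorem yields a solution $u_{m}$ with $\|u_{m}\|\le C\theta^{m}$; one then compares $F_{m}=\chi g h^{m}-u_{m}$ with $g h^{m}$, lets $z\to z^{0}$ so that $|h(z)|^{m}\to1$, and finally sends $m\to\infty$. There is no singular weight, no global patching of $\log|h-1|$, and no need to force $u(z)=0$.

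Your argument instead rests on the assertion that the normalized extremal $g=g_{z}$ satisfies $\int_{K}|g_{z}|^{2}\to0$ as $z\to z^{0}$, and the justification you give does not establish it. The mean--value (sub--mean--value) inequality yields only the uniform bound $|g_{z}(w)|^{2}\le K_{\Omega'}(w)\le C_{K}$ for $w\in K$; it gives boundedness, not decay. An $L^{2}$--normalized holomorphic function can perfectly well have $|g(z)|\to\infty$ while keeping a fixed positive fraction of its mass on a distant compact set. For a general bounded pseudoconvex $\Omega'$ with only a \emph{local} peak function, proving this concentration a~priori is not easier than the localization theorem itself---and the natural way to obtain it is precisely the $h^{m}$ trick you have omitted. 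So either reinstate the multiplier $h^{m}$ (after which the weight $N\log|h-1|$, the regularized--maximum gluing, and the $2n\log|w-z|$ point constraint all become unnecessary), or supply an independent proof of $\int_{K}|g_{z}|^{2}\to0$; as written, that step is a gap.
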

Recall that $z^0$ is a local holomorphic peak point if there exist a neighborhood $U$ of $z^0$ and a function $h$ holomorphic in $\Omega\cap U$, continuous in $\overline{\Omega\cap U}$ such that $h(z^0)=1$ and $|h(z)|<1$ for all $z\in\overline{\Omega\cap U}\setminus \{z^0\}$.
\par It is natural to ask whether the analogue holds for the Szeg\"o kernel $S_\Omega$, i.e.,
\begin{problem}\label{pr:1}
Let $\Omega$ be a bounded pseudoconvex domain with smooth boundary in $\C^n$ and $z^0\in\partial\Omega$ a local holomorphic peak point. Suppose that $\Omega' \subset \Omega$ is a smooth pseudoconvex domain so that $\partial\Omega'\cap\partial\Omega$ contains a neighborhood of $z^0$. Does it follow that
$$
\frac{S_\Omega(z)}{S_{\Omega'}(z)}\rightarrow 1 \ \  (z\rightarrow z^0)\ ?
$$
\end{problem}
For the sake of simplicity, we assume that $z^0=0$ and the outer normal at $0$ is $(0',1)$, where $z'=(z_1, z_2, \cdots, z_{n-1})$.
\par To state our result, we introduce the following. 
\begin{definition}\label{de:1}
Let $\Omega\subset\mathbb{C}^n$ be a bounded pseudoconvex domain with smooth boundary. We say that $\Omega$ satisfies  holomorphic tangential transverse property (HTTP) at $0\in\partial\Omega$ if there exist a smooth defining function $\rho$ on $\Omega$ and $r>0$, such that for all $ t\in\Delta_r:=\{t\in\mathbb{C}:|t|<r\},$ $\Omega_t:=\Omega\cap\{z_n=t\}$ is a smooth open set with defining function $\rho|_{\Omega_t}$. 
\end{definition}
We shall give a partial answer of Problem \ref{pr:1} as follows.
\begin{theorem} \label{th: localization}
  	Let $\Omega \subset \mathbb{C}^n$ be a smooth bounded  pseudoconvex domain satisfying HTTP at $0\in\partial\Omega$, which is a local holomorphic peak point. Suppose that $\Omega' \subset \Omega$ is a smooth pseudoconvex domain so that $\partial\Omega\cap\partial\Omega'$ contains a  neighborhood of $0$. Then we have
  	\begin{equation} 
  	\frac{S_\Omega (z) }{S_{\Omega'} (z)} \rightarrow 1, \ \ \ (z \rightarrow 0).
  	\end{equation}
  \end{theorem}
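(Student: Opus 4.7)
My strategy is to adapt H\"ormander's Bergman-kernel argument to the Szeg\"o setting, replacing the interior weighted $\bar\partial$-correction with a tangential $\bar\partial_b$-correction on $\partial\Omega$, combined with a local peak-function concentration. Using the extremal characterization
\[
S_\Omega(z)=\sup\{|f(z)|^2 : f\in H^2(\Omega),\ \|f\|_{L^2(\partial\Omega)}\le 1\}
\]
and its analogue on $\Omega'$, the assertion $S_\Omega(z)/S_{\Omega'}(z)\to 1$ splits into $\liminf\ge 1$ and $\limsup\le 1$; both reduce symmetrically to transferring a normalized extremal in one Hardy space into the other while almost preserving norm and value at $z$.

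I would fix nested neighborhoods $V\Subset U$ of $0$ with $V\cap\partial\Omega\subset\partial\Omega'$ (so $V\cap\Omega=V\cap\Omega'$), a cutoff $\chi\in C_c^\infty(U)$ with $\chi\equiv 1$ on $V$, and the local holomorphic peak function $h$ of the hypothesis. Set $\kappa:=\sup_{(U\setminus V)\cap\overline{\Omega}}|h|<1$. Given $f$ normalized with $|f(z)|^2=S_{\Omega'}(z)$ and $\|f\|_{L^2(\partial\Omega')}=1$, define $F:=\chi h^N f$, extended by zero off $U$. Because $\partial\Omega\cap U=\partial\Omega'\cap U$, $F\in L^2(\partial\Omega)$ with $\|F\|\le 1$, and
\[
\bar\partial_b F=(\bar\partial_b\chi)\,h^N f
\]
is supported on $(U\setminus V)\cap\partial\Omega$, whence $\|\bar\partial_b F\|_{L^2(\partial\Omega)}\le C\kappa^N$. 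Invoking Kohn--Shaw-type $L^2$ theory for $\bar\partial_b$ on the pseudoconvex boundary $\partial\Omega$ produces a solution $u$ with $\|u\|_{L^2(\partial\Omega)}\le C'\kappa^N$; then $F-u$ is CR and extends, by Bochner/Kohn--Rossi for smooth bounded pseudoconvex boundaries, to $\tilde f\in H^2(\Omega)$ with $\|\tilde f\|_{H^2(\Omega)}\le 1+C'\kappa^N$.

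To compare $\tilde f(z)$ with $f(z)$ as $z\to 0$, I observe that $\bar\partial_b u=0$ on $V\cap\partial\Omega$, so $u|_{V\cap\partial\Omega}$ is CR and extends locally to a holomorphic $v$ on $V\cap\Omega=V\cap\Omega'$, and $\tilde f=h^N f-v$ there. A Poisson/Henkin-type boundary representation combined with $\|u\|_{L^2(\partial\Omega)}\le C'\kappa^N$ would give a pointwise bound of the form $|v(z)|\lesssim\kappa^N\sqrt{S_\Omega(z)}$ as $z\to 0$. Plugging into the extremal inequality $S_\Omega(z)\ge|\tilde f(z)|^2/\|\tilde f\|^2$ and expanding produces a quadratic inequality in $r:=\sqrt{S_\Omega(z)/S_{\Omega'}(z)}$; letting $z\to 0$ (so $|h(z)|^N\to 1$) and then $N\to\infty$ forces $\liminf r\ge 1$, with no a priori comparison required. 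The reverse inequality follows by running the same construction with the roles of $\Omega$ and $\Omega'$ swapped, using that the peak function for $\Omega$ also peaks for $\Omega'$ since the boundaries agree on $U$.

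The main obstacle is the pointwise estimate $|v(z)|\lesssim\kappa^N\sqrt{S_\Omega(z)}$: the boundary $L^2$-bound on $u$ does not by itself deliver such a pointwise decay for the interior CR extension, and some uniform interior-to-boundary comparison is required. This is precisely where the HTTP hypothesis enters, as the slicewise compatibility of the defining function $\rho|_{\Omega_t}$ produces a coherent foliation of $\partial\Omega$ near $0$ along which one can run the Kohn--Shaw estimate and the associated Poisson-type representation uniformly in the slice parameter, yielding the required pointwise decay of $v$ in a non-tangential approach region. Modulo this step, everything reduces to standard Hardy-space manipulation and the classical closed-range theory of $\bar\partial_b$.
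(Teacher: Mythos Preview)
Your skeleton—cutoff times peak power, $\bar\partial_b$-correction on the boundary, Bochner--Hartogs extension, then the Szeg\H{o} extremal inequality—is exactly the paper's. Step~1 of the paper (from the $S_\Omega$-extremal to $H^2(\Omega')$, giving $\varliminf S_{\Omega'}/S_\Omega\ge1$) runs essentially as you sketch, except that the pointwise control on the correction is obtained not via any local CR extension but simply because $F_m-fh^m\in H^2(\Omega')$ with small $L^2(\partial\Omega')$ norm, so that $|F_m(w)-f(w)h^m(w)|\le C\theta^m\sqrt{S_{\Omega'}(w)}$ directly from the extremal characterization.

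The genuine gap is your claim that the two directions are symmetric. In the hard direction (the one you display: from the $S_{\Omega'}$-extremal $f$ to $\tilde f\in H^2(\Omega)$) you solve $\bar\partial_b$ on $\partial\Omega$, so you control $\|u\|_{L^2(\partial\Omega)}$. But to run the extremal inequality you need $\|\tilde f-fh^N\|_{L^2(\partial\Omega')}$ small, hence $\|u\|_{L^2(\partial\Omega')}$ small. On $\partial\Omega'\cap\partial\Omega$ this is immediate; on $\partial\Omega'\setminus\partial\Omega$—an \emph{interior} hypersurface of $\Omega$—the cutoff vanishes, so the quantity to bound is $\int_{\partial\Omega'\setminus\partial\Omega}|\tilde f|^2\,dS$, and nothing in the $\bar\partial_b$-estimate controls a Hardy function on an interior hypersurface. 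This, not a pointwise estimate on a local holomorphic extension $v$, is the real obstacle; your proposed bound $|v(z)|\lesssim\kappa^N\sqrt{S_\Omega(z)}$ would in fact require exactly such interior-hypersurface control and does not follow from $\|u\|_{L^2(\partial\Omega)}$ alone.

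Accordingly, HTTP enters differently from what you describe. It is not used to run Kohn--Shaw or a Poisson representation ``uniformly in the slice''; rather, HTTP guarantees that the slices $\Omega_t=\Omega\cap\{z_n=t\}$ form a smooth family near $0$, so that (Lemmas~2.2--2.4) one can compare surface integrals over pieces of $\partial\Omega'$ inside $\Omega$ with surface integrals over $\partial\Omega$ by a Fubini-type slicing and slice-wise Poisson/Green estimates with constants uniform in $t$. Combined with an explicitly constructed auxiliary smooth domain $\Omega''\subset\Omega'$ (obtained by adding a bump $\epsilon_0\kappa(z_n)$ to $\rho'$), this yields $\int_{\partial\Omega'\setminus\partial\Omega}|F_m|^2\,dS\le C\theta^{2m}$ and closes the argument. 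Without this construction the ``reverse'' inequality is not a mirror of the first, and your last paragraph does not supply a substitute.
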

\begin{remark}
It is not difficult to see that the condition in Theorem \ref{th: localization} is satisfied if $0$ is a strongly pseudoconvex point. 
\end{remark}
On the other hand, it is known, as a direct consequence of the Ohsawa-Takegoshi extension theorem (cf.\ \cite{Ohsawa81},\ see also \cite{Fu94}), that the Bergman kernel $K_\Omega(z)\gtrsim\delta_{\Omega}^{-2}$ for bounded pseudoconvex domains with $C^2$ boundary, where $\delta_\Omega (z)$ denotes the Euclidean distance to the boundary $\partial \Omega$. It is natural to propose the following
\begin{problem}\label{pr:2}
Let $\Omega$ be a bounded pseudoconvex domain with smooth boundary in $\C^n$. Is $S_\Omega(z)$ an exhaustion function? Moreover, is it possible to conclude that 
$$
S_\Omega(z)\gtrsim\delta_{\Omega}(z)^{-1}\ ?
$$
\end{problem}
It was shown in \cite{ChenFu11} that $S_\Omega(z)\gtrsim \delta_\Omega(z)^{-1}|\log \delta_\Omega(z)|^{-a}$ with some $a>0$ for  $\delta-$regular domains, which include bounded pseudoconvex domains of finite type and bounded domains with a defining function plurisubharmonic in the boundary. The authors of \cite{ChenFu11} also gave an affirmative answer to Problem \ref{pr:2} for convex domains. Here we will show

     \begin{theorem} \label{th:exhaustszego}
     Let $\Omega\subset {\mathbb C}^2$ be a bounded pseudoconvex domain with smooth boundary. Then for any $z^0\in\partial\Omega$, there exists a constant $C>0$, such that  
     $$
     S_\Omega(z)\geq C|z-z^0|^{-1/3}
     $$   
as $z$ tents to $z^0$ non-tangentially.
\end{theorem}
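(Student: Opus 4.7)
The plan is to work from the extremal characterization
\[
S_\Omega(z) \;=\; \sup\bigl\{|f(z)|^2 : f\in H^2(\Omega),\ \|f\|_{H^2(\partial\Omega)}\le 1\bigr\},
\]
and, for each $z$ approaching $z^0$ non-tangentially, to produce a single holomorphic test function $f$ with $|f(z)|^2/\|f\|_{H^2(\partial\Omega)}^2\gtrsim |z-z^0|^{-1/3}$. I would first normalize coordinates so that $z^0=0$ and the outer unit normal at $0$ is $(0,1)$; locally the boundary is then a graph $\{\mathrm{Re}(w_2)=-r(w_1,\mathrm{Im}\,w_2)\}$ with $r$ smooth and vanishing to second order at $0$, and the non-tangential hypothesis gives $z=(0,-\delta)+O(\delta)$ with $\delta\asymp|z-z^0|$.

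For the test function the plan is to take
\[
f(w)\;=\;c_\delta\,(a-w_2)^{-1/6},
\]
where $a\in\mathbb{C}$ is chosen of order $\delta$ so that the singular point $w_2=a$ lies safely off $\bar\Omega$ (generically a small imaginary perturbation of a positive real $a$ is enough), and $c_\delta=(a+\delta)^{1/6}\asymp\delta^{1/6}$ is a normalization making $f(z)=1$. Parametrizing $\partial\Omega$ near $0$ by $(w_1,t)$ with $t=\mathrm{Im}\,w_2$, the inequality $|a-w_2|^2=(a+r)^2+t^2\ge t^2$ yields
\[
\int_{\partial\Omega\cap U}|f|^2\,d\sigma \;\lesssim\; c_\delta^2 \int |t|^{-1/3}\,dA(w_1)\,dt \;\lesssim\; \delta^{1/3},
\]
since $|t|^{-1/3}$ is integrable in one real variable; the contribution from $\partial\Omega\setminus U$ is $O(c_\delta^2)=O(\delta^{1/3})$ because $|a-w_2|$ is bounded below there. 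If $f$ were genuinely holomorphic on all of $\Omega$ this would immediately give $S_\Omega(z)\gtrsim\delta^{-1/3}$.

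The remaining step, and the main obstacle, is the globalization. I would introduce a smooth cut-off $\chi$ equal to $1$ near $z^0$ and solve $\bar\partial u=\bar\partial(\chi f)$ on $\Omega$; because $\operatorname{supp}\bar\partial\chi$ is separated from the singularity $w_2=a$, the datum is uniformly bounded and H\"ormander's weighted $L^2$-theory gives a solution with $\|u\|_{L^2(\Omega)}=O(1)$. The delicate point is to upgrade this to a control in $H^2(\partial\Omega)$, i.e.\ to arrange boundary values of $u$ in $L^2(\partial\Omega)$ with norm uniform in $\delta$. For a smooth bounded pseudoconvex domain in $\mathbb{C}^2$ such boundary-$L^2$ estimates can be carried out via the $\bar\partial_b$-theory of Kohn combined with the Szeg\"o projection, or equivalently via Henkin-type integral representations adapted to the Hardy space; it is precisely the loss incurred in this correction that forces the exponent $\alpha=1/6$ in the ansatz as the correct balance between the local Hardy size of $f$ and the cost of making it globally holomorphic, and hence produces the stated exponent $-1/3$. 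Once $F:=\chi f-u$ is shown to be holomorphic on $\Omega$ with $|F(z)|\ge c>0$ and $\|F\|_{H^2(\partial\Omega)}^2\lesssim\delta^{1/3}$, the extremal formula yields $S_\Omega(z)\gtrsim|z-z^0|^{-1/3}$ as claimed.
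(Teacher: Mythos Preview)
Your outline diverges from the paper's proof in both the choice of test function and the globalization mechanism, and it contains two genuine gaps that are not merely cosmetic.

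\textbf{Gap 1: boundary control on $u$.} You solve the interior equation $\bar\partial u=\bar\partial(\chi f)$ and obtain only $\|u\|_{L^2(\Omega)}=O(1)$. You correctly flag that upgrading this to $\|u\|_{L^2(\partial\Omega)}=O(1)$ is ``the delicate point'', but the appeal to ``Kohn's $\bar\partial_b$-theory combined with the Szeg\"o projection'' is not an argument; no quantitative estimate is stated, let alone proved. In fact this is exactly where the paper invokes a specific tool: Shaw's $L^2$ existence theorem for $\bar\partial_b$ on $\partial\Omega$ (Theorem~\ref{th:shawtheorem}), which solves $\bar\partial_b u=v$ with $\|u\|_{L^2(\partial\Omega)}\le C\|v\|_{L^2(\partial\Omega)}$ directly on the boundary, bypassing the interior problem entirely.

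\textbf{Gap 2: the value $|F(z)|\ge c$.} Even granting boundary control, you assert without argument that $F=\chi f-u$ satisfies $|F(z)|\ge c>0$ at the test point. Since $z$ is at distance $\delta$ from $\partial\Omega$, neither $\|u\|_{L^2(\Omega)}=O(1)$ nor $\|u\|_{L^2(\partial\Omega)}=O(\delta^{1/6})$ gives any usable pointwise bound on $u(z)$: a mean-value estimate costs $\delta^{-2}$, and Cauchy--Schwarz against the Poisson kernel costs $\delta^{-3/2}$. You never address this, and it is the more serious obstruction.

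\textbf{How the paper actually proceeds.} After a quadratic change of coordinates putting the outward normal along ${\rm Re}\,z_1$, the paper uses the full pole $f_\varepsilon(z_1)=(z_1-\varepsilon)^{-1}$, cut off only in the complex-tangential variable by $\chi(|z_2|/r_\varepsilon)$ with $r_\varepsilon=c\,\varepsilon^{1/3}$. Pseudoconvexity and the $O(|z_2|^3)$ remainder in the Taylor expansion of $\rho$ force the bidisc $\{|z_1-\varepsilon|<\varepsilon/2,\ |z_2|<r_\varepsilon\}$ outside $\overline\Omega$; this geometric fact is the true source of the exponent $1/3$. One then sets $v_\varepsilon=\tau\bigl(\tfrac{1}{z_2}\bar\partial\tilde f_\varepsilon\bigr)$, applies Shaw's theorem to get $u\in L^2(\partial\Omega)$ with $\|u\|^2\lesssim(\varepsilon r_\varepsilon^{2})^{-1}$, and defines $F=\tilde f_\varepsilon-z_2 u$. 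The key trick is the factor $z_2$: the correction $z_2u$ vanishes on the slice $\{z_2=0\}$, so on $\partial\Omega\cap\{z_2=0\}$ the CR extension $\widetilde F$ agrees with $f_\varepsilon$, and by the F.\ and M.\ Riesz uniqueness theorem $\widetilde F(-\varepsilon,0)=f_\varepsilon(-\varepsilon)=(-2\varepsilon)^{-1}$ exactly. This simultaneously resolves both of your gaps.

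Finally, your claim that the ``loss incurred in the correction'' forces $\alpha=1/6$ is unsupported: with your ansatz $(a-w_2)^{-\alpha}$ the local Hardy norm is $O(\delta^{\alpha})$ for every $\alpha<1/2$, and nothing in your outline singles out $1/6$. The paper's exponent comes from the scale $r_\varepsilon\sim\varepsilon^{1/3}$ at which a tangential bidisc can be excluded, not from any analytic loss in solving $\bar\partial_b$.
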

In order to prove Theorem \ref{th: localization}, we apply the method of H\"ormander \cite{Hormander65} with the $L^2$ existence theorem of $\bar\partial$ replaced by the $L^2$ existence theorem for the $\bar{\partial}_b$ due to Shaw \cite{Shaw85}. However, since the Szeg\"o kernel does not enjoy the monotonicity with respect to defining domains, some technical difficulties arise, which explains why we need the HTTP. Theorem \ref{th:exhaustszego} is proved by solving an extension problem for the Hardy space $H^2$, with the help of Shaw's  $L^2$ existence theorem for the $\bar{\partial}_b$. Nevertheless, due to the lack of weighted $L^2$ estimates, we can only deal with a weak case for $n=2$ and the growth exponent is not optimal.

\section{Preliminaries}
\subsection{Hardy space and the Szeg\"o kernel}

Let $\Omega$ be a bounded open set in $\C^n$ with smooth boundary. Let $\Omega_\varepsilon = \{ z\in \Omega: \delta_\Omega (z) > \varepsilon \}$. Following Stein \cite{Stein72} or Krantz \cite{Krantz01}, the harmonic Hardy space $h^2(\Omega)$ is the space of harmonic function $f$ on $\Omega$ satisfying
\begin{eqnarray*}
	\| f\| ^2_{h^2} := \limsup _{\varepsilon \rightarrow 0^+} \int _{ \partial \Omega_\varepsilon} |f(z)|^2 d S < \infty.
\end{eqnarray*}
It is well-known that the non-tangential limit $f^*(\zeta)$ of $f$ exists for almost every point $\zeta$ on $\partial \Omega$. Furthermore, $f^* \in L^2(\partial \Omega)$, $\|f\|_{h^2} = \|f^*\|_{L^2(\partial \Omega)}$, and \begin{eqnarray*}
	f(z) = \int _{\zeta\in\partial \Omega} P_\Omega(z , \zeta) f^*(\zeta) d S,
\end{eqnarray*}
where $P_\Omega(z,\zeta)$ is the Poisson kernel of $\Omega$.
\par The following result is essentially implicit in \cite{Krantz01}, P.\,332--334.
\begin{theorem}\label{poisson}
Let $\Omega\subset\mathbb{C}^n$ be a bounded domain with smooth boundary. If we denote by $B(c_w,r_1)$ and $B(C_w,r_2)$ the internally and externally tangent balls at $w\in\partial\Omega$ respectively, then for $z\in\Omega\setminus B(c_w,r_1)$, 
$$
\frac{1}{4^n r_1} G_{\Omega} (z,c_w)\leq P_{\Omega}(z,w)\leq\frac{2(r_2+\rm{diam}(\Omega))}{W_{2n-1}\cdot r_2}\delta^{1-2n}(z),
$$
where $G_\Omega (z,\zeta)$ is the Green function of $\Omega$ and $W_{2n-1}$ is the area measure of the unit sphere.
\end{theorem}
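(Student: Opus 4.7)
The two inequalities decouple and each is obtained by comparison with a model domain: the interior tangent ball $B(c_w,r_1)$ yields the lower bound, and the complement of the exterior tangent ball $B(C_w,r_2)$ yields the upper bound. In both cases the strategy is to phrase things through the Green function and then pass to a normal derivative at $w$. Throughout, let $\nu_w$ denote the unit outward normal to $\partial\Omega$ at $w$, so that $c_w=w-r_1\nu_w$, $C_w=w+r_2\nu_w$, and
\[
P_\Omega(z,w)=\lim_{s\to 0^+}\frac{G_\Omega(z,w-s\nu_w)}{s}.
\]

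For the lower bound, I would fix $z\in\Omega\setminus B(c_w,r_1)$. Since the singularity at $z$ lies outside $B(c_w,r_1)$, the function $y\mapsto G_\Omega(z,y)$ is positive and harmonic on that ball, and the standard Harnack lower estimate on a ball (obtained from the explicit Poisson formula) gives
\[
G_\Omega(z,y)\;\geq\;\frac{(r_1-|y-c_w|)\,r_1^{2n-2}}{(r_1+|y-c_w|)^{2n-1}}\,G_\Omega(z,c_w).
\]
Specializing to $y=w-s\nu_w$ (so that $|y-c_w|=r_1-s$), dividing by $s$, and letting $s\to 0^+$ yields $P_\Omega(z,w)\geq\tfrac{1}{2^{2n-1}r_1}G_\Omega(z,c_w)$, and since $2^{2n-1}\leq 4^n$ this is enough.

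For the upper bound, I would put $E:=\mathbb{R}^{2n}\setminus\overline{B(C_w,r_2)}$; the external tangency gives $\Omega\subset E$ and $w\in\partial\Omega\cap\partial E$. Domain monotonicity of Green's functions (maximum principle applied to $G_E-G_\Omega$ as a function of $y\in\Omega$) yields $G_\Omega(z,y)\leq G_E(z,y)$, and since both sides vanish at $y=w$, the one-sided normal derivative produces $P_\Omega(z,w)\leq P_E(z,w)$. The Poisson kernel of the exterior ball, coming from the classical ball formula via Kelvin inversion in $\partial B(C_w,r_2)$, is
\[
P_E(z,w)=\frac{|z-C_w|^2-r_2^2}{W_{2n-1}\,r_2\,|z-w|^{2n}}.
\]
Factoring the numerator as $(|z-C_w|-r_2)(|z-C_w|+r_2)$ and using $|z-C_w|\leq|z-w|+r_2\leq\mathrm{diam}(\Omega)+r_2$ gives $|z-C_w|-r_2\leq|z-w|$ and $|z-C_w|+r_2\leq 2(r_2+\mathrm{diam}(\Omega))$; combining with $\delta_\Omega(z)\leq|z-w|$ then produces the claimed upper bound.

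The only real technical point is the implication $G_\Omega\leq G_E\Rightarrow P_\Omega\leq P_E$: one needs $G_E-G_\Omega$ to have a well-defined one-sided normal derivative at $w$, but since $\partial\Omega$ and the sphere $\partial E$ are both smooth near $w$ and the (nonnegative, harmonic) difference vanishes at $w$, this is standard. Everything else is a routine Harnack computation or a direct manipulation of the explicit Poisson kernel of the exterior of a ball.
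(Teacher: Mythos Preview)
Your argument is correct and is precisely the standard potential-theoretic route. Note that the paper does not give its own proof of this theorem: it simply records that the result is ``essentially implicit in \cite{Krantz01}, p.\,332--334,'' and the argument sketched there is the one you have written out---Harnack's inequality applied to $y\mapsto G_\Omega(z,y)$ on the interior tangent ball for the lower bound, and Green-function domain monotonicity together with the explicit Poisson kernel of the exterior of a ball for the upper bound. Your constants match, and the passage from $G_\Omega\le G_E$ to $P_\Omega\le P_E$ via one-sided normal derivatives at a smooth boundary point is indeed routine. One small caveat worth noting if you write this up in full generality: for $n=1$ the exterior of a disk in $\mathbb{R}^2$ is not Greenian in the usual sense, so the comparison should be carried out against a large annulus (or one should invoke the bounded-harmonic Poisson representation for the exterior disk directly); for $n\ge 2$---the case actually used in the paper---no such adjustment is needed.
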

The holomorphic Hardy space $H^2(\Omega)$ is the space of holomorphic functions on $\Omega$ lying in  $h^2(\Omega)$. The Szeg$\rm{\ddot{o}}$ kernel $S_\Omega(z,w)$ is the reproducing kernel of $H^2(\Omega)$, i.e., 
\begin{eqnarray*}
	f(z) = \int _{\partial \Omega} S_\Omega (z,w) f(w)dS, \ \ \ \forall \, f\in H^2(\Omega), \ \  \forall \, z \in \Omega.
\end{eqnarray*}
It follows that 
\begin{eqnarray*}
	S_{\Omega}(z):=S_{\Omega}(z,z) = \sup \{ |f(z)|^2: f \in H^2(\Omega), \| f\| _{h^2{(\Omega)}} \leq 1 \}.
\end{eqnarray*}
\begin{lemma}[cf. \cite{ChenFu11}, Lemma 2.1]\label{le:0}
	Let $\Omega_2\subset\Omega_1 \subset\mathbb{C}^n$ be smooth bounded domains. For any fixed $z^0\in\Omega_2$, we have 
	\begin{eqnarray*}
		\int_{\partial \Omega_{2}}|f|^2 d S \leq C(\Omega_1,\Omega_2,z^0) \int_{\partial \Omega_{1}}|f|^2 dS,\ \ \ \ \ \forall\, f\in h^2(\Omega_1),
	\end{eqnarray*}
where
$$C(\Omega_1,\Omega_2,z^0):=\frac{\sup_{w\in\partial\Omega_{1}}P_{\Omega_{1}}(z^0,w)}{\inf_{w\in\partial\Omega_{2}}P_{\Omega_{2}}(z^0,w)}.$$

\end{lemma}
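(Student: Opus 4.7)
The plan is to reduce the lemma to the standard majorization $|f|^2 \le u$, where $u$ is the Poisson integral of $|f^*|^2$ on $\Omega_1$, and then to move the integration over $\partial\Omega_2$ inside by Fubini. Concretely, for $f \in h^2(\Omega_1)$ the excerpt already gives the Poisson representation $f(z)=\int_{\partial\Omega_1} P_{\Omega_1}(z,w)f^*(w)\,dS(w)$. Applying Cauchy--Schwarz to this representation and using $\int_{\partial\Omega_1} P_{\Omega_1}(z,w)\,dS(w)=1$, one obtains pointwise on $\Omega_1$
\begin{equation*}
|f(z)|^2 \le u(z) := \int_{\partial\Omega_1} P_{\Omega_1}(z,w)\,|f^*(w)|^2\,dS(w).
\end{equation*}
So the first step is this Cauchy--Schwarz majorization, which requires no subharmonic-function machinery.

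Next I would integrate this inequality over $\partial\Omega_2$ and use Fubini to get
\begin{equation*}
\int_{\partial\Omega_2}|f|^2\,dS \;\le\; \int_{\partial\Omega_1}|f^*(w)|^2\,I(w)\,dS(w), \qquad I(w):=\int_{\partial\Omega_2} P_{\Omega_1}(z,w)\,dS(z).
\end{equation*}
All that remains is to bound $I(w)$ by $C(\Omega_1,\Omega_2,z^0)$ uniformly in $w\in\partial\Omega_1$. For this, note that with $w\in\partial\Omega_1$ fixed, the function $z\mapsto P_{\Omega_1}(z,w)$ is harmonic on $\Omega_1$, hence harmonic on $\Omega_2$; the Poisson formula for $\Omega_2$ applied at $z^0$ then yields
\begin{equation*}
P_{\Omega_1}(z^0,w)=\int_{\partial\Omega_2} P_{\Omega_2}(z^0,z)\,P_{\Omega_1}(z,w)\,dS(z).
\end{equation*}

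The final step is the sup/inf trick: divide and multiply by $P_{\Omega_2}(z^0,z)$ inside $I(w)$ and lower-bound it by $\inf_{z\in\partial\Omega_2}P_{\Omega_2}(z^0,z)$ to obtain
\begin{equation*}
I(w)\le \frac{P_{\Omega_1}(z^0,w)}{\inf_{z\in\partial\Omega_2}P_{\Omega_2}(z^0,z)} \le \frac{\sup_{w\in\partial\Omega_1}P_{\Omega_1}(z^0,w)}{\inf_{z\in\partial\Omega_2}P_{\Omega_2}(z^0,z)}=C(\Omega_1,\Omega_2,z^0),
\end{equation*}
and plugging back gives the stated inequality. The only real obstacle is verifying the step where the Cauchy--Schwarz majorant, valid in the interior of $\Omega_1$, can be used against $|f|^2$ on $\partial\Omega_2$; when $\overline{\Omega_2}\subset\Omega_1$ this is immediate by continuity, and in the general case (where $\partial\Omega_2$ may touch $\partial\Omega_1$) one justifies it via the nontangential boundary-value theory stated in the preliminaries, which guarantees $f^*=u^{1/2}$ a.e.\ on the common boundary. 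Everything else is bookkeeping: Fubini is legitimate because $P_{\Omega_1}\ge 0$ and $|f^*|^2\in L^1(\partial\Omega_1)$, and the sup/inf in the definition of $C$ are finite and positive since $z^0$ is an interior point of both domains with smooth boundary.
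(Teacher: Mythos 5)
The paper itself does not prove Lemma~\ref{le:0}; it cites \cite{ChenFu11}, Lemma~2.1, so there is no in-text proof to compare against. Your argument is correct in substance and is, after unwinding, the standard least-harmonic-majorant proof in a slightly rearranged order: the Cauchy--Schwarz step identifies $u$ as the Poisson integral of $|f^*|^2$, i.e.\ the least harmonic majorant of $|f|^2$ on $\Omega_1$, and your Fubini step together with the representation of $P_{\Omega_1}(\cdot,w)$ via $P_{\Omega_2}$ is the same computation as applying the Poisson formula for $\Omega_2$ directly to $u$ at $z^0$ and bounding $P_{\Omega_2}(z^0,\cdot)$ below by its infimum on $\partial\Omega_2$. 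Neither route is more general than the other; they are two orderings of the same inequalities.

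Two technical points deserve more care than you give them. First, the identity
$P_{\Omega_1}(z^0,w)=\int_{\partial\Omega_2}P_{\Omega_2}(z^0,z)\,P_{\Omega_1}(z,w)\,dS(z)$
is an equality only when $P_{\Omega_1}(\cdot,w)$ is continuous on $\overline{\Omega_2}$; when $w$ sits on the common boundary $\partial\Omega_1\cap\partial\Omega_2$, the positive harmonic function $P_{\Omega_1}(\cdot,w)\big|_{\Omega_2}$ may carry singular Herglotz mass at $w$, and one only has ``$\ge$.'' That is the direction your bound needs, so the conclusion survives, but the step as written overclaims. Second, and more importantly, in every application of this lemma in the paper (Lemmas~\ref{le:1}--\ref{le:3} and Step~2 of the proof of Theorem~\ref{th: localization}) the set $\partial\Omega_1\cap\partial\Omega_2$ has positive measure, so $\int_{\partial\Omega_2}|f|^2\,dS$ must be read as the $h^2(\Omega_2)$ seminorm $\limsup_{\epsilon\to 0}\int_{\partial(\Omega_2)_\epsilon}|f|^2\,dS$; your Fubini is then not directly on $\partial\Omega_2$ but should be carried out on the level sets $\partial(\Omega_2)_\epsilon$ (where $u$ and $P_{\Omega_1}(\cdot,w)$ are genuinely continuous), yielding
$\int_{\partial(\Omega_2)_\epsilon}|f|^2\,dS\le u(z^0)\big/\inf_{\partial(\Omega_2)_\epsilon}P_{(\Omega_2)_\epsilon}(z^0,\cdot)$,
after which one lets $\epsilon\to 0$ using smoothness of $\partial\Omega_2$ to pass to $\inf_{\partial\Omega_2}P_{\Omega_2}(z^0,\cdot)$. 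You acknowledge the issue in your last paragraph but only gesture at it; the $\epsilon$-approximation is what actually closes it.
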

\begin{lemma}\label{le:1}
	Let $\Omega_2\subset\Omega_1 \subset\mathbb{C}^n$ be smooth bounded domains such that $0\in\partial\Omega_1$. Set $\Omega_{k,t}:=\Omega_k\cap\{z_n=t\}$, for $k=1,2$ and $t\in\C$.  If there exists a relatively compact open set $U\subset\C$  so that $\{\Omega_{k,t}\}_{t\in U}$ are smooth families of smooth domains in $\C^{n-1}$ for $k=1,2$, then
	\begin{eqnarray*}
		\int_{z'\in\partial \Omega_{2,t}}|f(z',t)|^2 d S' \leq C \int_{z'\in\partial \Omega_{1,t}}|f(z',t)|^2 dS',\ \ \ \ \ \forall\, f\in H^2(\Omega_1), \ |t|\ll 1,
	\end{eqnarray*}
where $C$ is independent of $t$.
\end{lemma}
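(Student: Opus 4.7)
The plan is to apply Lemma \ref{le:0} slicewise to each pair $\Omega_{2,t}\subset\Omega_{1,t}\subset\mathbb{C}^{n-1}$ and then verify, using Theorem \ref{poisson} together with the smooth family hypothesis, that the resulting constant $C_t:=C(\Omega_{1,t},\Omega_{2,t},{z'}^0)$ can be chosen independent of $t$ for $|t|$ small.

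First I would fix a point ${z'}^0\in\mathbb{C}^{n-1}$ so that $({z'}^0,t)\in\Omega_{2,t}$ with $\delta_{\Omega_{2,t}}({z'}^0)\geq c_0>0$ for all $|t|\leq\delta_0$; such a point exists because $\Omega_{2,0}$ is a nonempty smooth domain and the slices depend continuously on $t$ by hypothesis. For $f\in H^2(\Omega_1)$ the slice $f(\cdot,t)$ is holomorphic on $\Omega_{1,t}$; since $\partial\Omega_{1,t}\subset\partial\Omega_1$ and the slicing provides a smooth foliation of a collar of $\partial\Omega_1$, a Fubini argument applied to $f^*\in L^2(\partial\Omega_1)$ gives $f^*|_{\partial\Omega_{1,t}}\in L^2(\partial\Omega_{1,t})$ for a.e.\ $t\in U$, so $f(\cdot,t)\in h^2(\Omega_{1,t})$. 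Applying Lemma \ref{le:0} in $\mathbb{C}^{n-1}$ to $\Omega_{2,t}\subset\Omega_{1,t}$ with reference point ${z'}^0$ then yields the stated inequality with constant $C_t$.

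It remains to bound $C_t$ uniformly for $|t|\leq\delta_0$. Plugging Theorem \ref{poisson} (in dimension $n-1$) into the definition of $C_t$, the numerator $\sup_{w'}P_{\Omega_{1,t}}({z'}^0,w')$ is dominated by factors involving $\mathrm{diam}(\Omega_{1,t})$, the externally tangent ball radius at $w'\in\partial\Omega_{1,t}$, and $\delta_{\Omega_{1,t}}({z'}^0)^{3-2n}$, all uniformly controlled by smoothness of the family and by the choice of ${z'}^0$. The denominator $\inf_{w'}P_{\Omega_{2,t}}({z'}^0,w')$ is bounded below by $(4^{n-1}r_1)^{-1}\inf_{w'}G_{\Omega_{2,t}}({z'}^0,c_{w'})$, where $r_1$ is the internally tangent ball radius and $c_{w'}$ its center.

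The hard part will be the uniform positive lower bound on $G_{\Omega_{2,t}}({z'}^0,c_{w'})$ as $w'\in\partial\Omega_{2,t}$ and $|t|\leq\delta_0$ vary. Since $c_{w'}$ always lies at distance at least $r_1>0$ from $\partial\Omega_{2,t}$ and within the uniformly bounded $\Omega_{2,t}$, I would either compare $G_{\Omega_{2,t}}$ with the explicit Green function of a fixed inscribed ball around $c_{w'}$ (giving a bound depending only on $r_1$ and $|{z'}^0-c_{w'}|$), or invoke continuous dependence of the Green function on the domain in the $C^\infty$ topology combined with compactness of the parameter set $\{(w',t):w'\in\partial\Omega_{2,t},\,|t|\leq\delta_0\}$. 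Either route gives a $t$-independent bound on $C_t$ and completes the proof.
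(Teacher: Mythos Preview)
Your proposal is correct and follows essentially the same route as the paper: apply Lemma~\ref{le:0} slicewise to $\Omega_{2,t}\subset\Omega_{1,t}$, then invoke Theorem~\ref{poisson} to bound the numerator of $C(z'_0,t)$ from above and the denominator from below, with the uniformity in $t$ coming from the smooth-family hypothesis and the stability of the Green function. The paper is terser---it simply cites ``the well-known stability of $G_{\Omega_{2,t}}(z'_0,c_{t,w'})$ in $t$'' for the lower bound---whereas you spell out two concrete mechanisms (inscribed-ball comparison or continuous dependence plus compactness); note that the inscribed-ball option needs a Harnack chain since $z'^0$ need not lie in $B(c_{w'},r_1)$, but your second option matches the paper exactly.
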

\begin{proof}
By Lemma \ref{le:0}, for any fixed point $z_0\in\Omega_{2,t}$, $|t|\ll 1$, we have
	\begin{eqnarray*}
		\int_{z'\in\partial \Omega_{2,t}}|f(z',t)|^2 d S' \leq C(z'_0,t) \int_{z'\in\partial \Omega_{1,t}}|f(z',t)|^2 dS',\ \ \ \ \ \forall\, f\in H^2(\Omega_1),
	\end{eqnarray*}
where 
$$C(z'_0,t):=\frac{\sup_{w'\in\partial\Omega_{1,t}}P_{\Omega_{1,t}}(z'_0,w')}{\inf_{w'\in\partial\Omega_{2,t}}P_{\Omega_{2,t}}(z'_0,w')}.$$
 Since $\{\Omega_{k,t}\}_{t\in U}$ are smooth families of smooth domains, it follows from Theorem \ref{poisson} that there exists a number $R_1>0$ such that  
\begin{eqnarray*}
		C(z'_0,t) \leq \frac{\frac{2(R_{1}+\rm{diam}(\Omega_{1,t}))}{W_{2n-1} R_{1}}\delta_{\Omega_{1,t}}^{1-2n}(z'_0)}{\inf_{w'\in\partial\Omega_{2,t}}\frac{1}{4^n r_{2}}G_{\Omega_{2,t}}(z_0',c_{t,w'})},\ \ \ |t|\ll 1,
	\end{eqnarray*}
where  $B(c_{t,w'},r_{2})$ denotes  the internally tangent ball at $w'\in\partial\Omega_{2,t}$. It is easy to see that the numerator of the RHS of the above inequality can be bounded above by a constant independent of $t$ by choosing suitable  $z'_0$. 
By the well-known stability of $G_{\Omega_{2,t}}(z_0',c_{t,w'})$ in $t$ we conclude that the  denominator can also be bounded below by a constant independent of $t$.
\end{proof}
\begin{lemma}\label{le:2}
	Let $\Omega\subset\mathbb{C}^n$ be a bounded domain with smooth boundary. Set $\Omega_{t}:=\Omega\cap\{z_n=t\}$, $t\in\C$. If there exists a relatively compact open set $U\subset\C$  so that $\{\Omega_{t}\}_{t\in U}$ is a smooth family of smooth domains in $\C^{n-1}$, then for any open set $ V\subset\subset U$ , there exists a constant $C$ such that
\begin{eqnarray*}
\int_{z'\in\Omega_t}|f(z',t)|^2 dV'\leq C\int_{z'\in\partial\Omega_{t}}|f(z',t)|^2 dS', \ \ \ \ \ \forall\, f\in H^2(\Omega),\ t\in V.
\end{eqnarray*}
\end{lemma}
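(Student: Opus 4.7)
The plan is to establish the inequality slicewise: for each fixed $t\in V$, the function $g(z'):=f(z',t)$ is holomorphic on the smooth bounded domain $\Omega_t\subset\mathbb{C}^{n-1}$, so $|g|^2$ is subharmonic there, and the desired estimate is an interior-from-boundary $L^2$ bound for nonnegative subharmonic functions. Once the inequality is proved for a single $t$ with constant $C_t$, the final task is to check that $C_t$ can be chosen independently of $t\in V$, which will follow from the smooth-family assumption together with the boundary behavior of the Poisson kernel supplied by Theorem \ref{poisson}.

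For the slicewise estimate I would use the subharmonic Poisson majorization
\[
|g(z')|^2\le\int_{\partial\Omega_t}P_{\Omega_t}(z',w')\,|g(w')|^2\,dS'(w'),\qquad z'\in\Omega_t,
\]
which is valid whenever $|g|^2$ admits $L^2$ boundary values (otherwise the right-hand side of the lemma is infinite and there is nothing to prove). Integrating in $z'$ and applying Fubini reduces matters to controlling
\[
\Phi_t(w'):=\int_{\Omega_t}P_{\Omega_t}(z',w')\,dV'(z'),\qquad w'\in\partial\Omega_t,
\]
uniformly in $t\in V$. By Green's identity, $\Phi_t(w')$ equals (up to sign) the outward normal derivative at $w'$ of the torsion function $u_t$ of $\Omega_t$, i.e.\ of the solution of $-\Delta u_t=1$ on $\Omega_t$ with $u_t|_{\partial\Omega_t}=0$. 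Elliptic regularity gives $u_t\in C^\infty(\overline{\Omega_t})$, and because $\{\Omega_t\}_{t\in U}$ is a smooth family, $u_t$ varies smoothly in $t$; thus $\sup_{t\in V}\|\partial_n u_t\|_{L^\infty(\partial\Omega_t)}<\infty$ since $\overline{V}\subset\subset U$. This yields the uniform constant. Alternatively one could combine the pointwise Poisson upper bound from Theorem \ref{poisson} with the Green-function lower bound and check that the resulting integrals are continuous in $t$, mirroring the argument used in Lemma \ref{le:1}.

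The main obstacle is that $f\in H^2(\Omega)$ does not immediately imply $f(\cdot,t)\in H^2(\Omega_t)$, so the Poisson representation above cannot be applied to $g=f(\cdot,t)$ straight away. I would address this by exhausting $\Omega_t$ by a smoothly increasing sequence of subdomains $\Omega_t^{(\epsilon)}\subset\subset\Omega_t$, on each of which $f(\cdot,t)$ is continuous up to the boundary and the Poisson majorization is valid, obtaining
\[
\int_{\Omega_t^{(\epsilon)}}|f(\cdot,t)|^2\,dV'\le C_t^{(\epsilon)}\int_{\partial\Omega_t^{(\epsilon)}}|f(\cdot,t)|^2\,dS'.
\]
As $\epsilon\to 0$, monotone convergence handles the left-hand side, the constants $C_t^{(\epsilon)}$ converge to $C_t$ by the smooth dependence of the torsion function on the domain, and the $\liminf$ of the boundary integrals on the right is dominated by $\int_{\partial\Omega_t}|f(\cdot,t)|^2\,dS'$ thanks to the $\limsup$ definition of the $h^2$ norm (with the convention that if the latter is infinite the conclusion is vacuous). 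Together with the uniform bound $\sup_{t\in V}C_t<\infty$ established in the previous paragraph, this gives the required inequality.
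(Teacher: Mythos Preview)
Your argument is correct, but the route differs from the paper's. The paper does not integrate the Poisson representation globally; instead it splits $\Omega_t$ into a core $\Omega_t^{\epsilon_0}=\{\delta_{\Omega_t}>\epsilon_0\}$ and a boundary collar. On the core it invokes the Bochner--Martinelli formula to get a pointwise bound $|f(z',t)|^2\le C\int_{\partial\Omega_t}|f|^2\,dS'$ (with $C$ uniform in $t$ because $\epsilon_0$ and the interior ball radius can be chosen uniformly), while on the collar it uses the coarea identity $\int_{\{\delta_{\Omega_t}<\epsilon_0\}}|f|^2\,dV'=\int_0^{\epsilon_0}\!\int_{\partial\Omega_t^\epsilon}|f|^2\,dS'\,d\epsilon$ together with a Lemma~\ref{le:1}-type Poisson comparison (via Theorem~\ref{poisson}) to bound each slice $\int_{\partial\Omega_t^\epsilon}|f|^2\,dS'$ by $C\int_{\partial\Omega_t}|f|^2\,dS'$ uniformly in $t$ and $\epsilon$. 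Your approach is more unified: a single Poisson majorization, Fubini, and the identification of $\int_{\Omega_t}P_{\Omega_t}(\cdot,w')\,dV'$ with the normal derivative of the torsion function, whose uniform boundedness follows from smooth dependence on $t$ (or, more simply, from a barrier since $u_t\le C\,\delta_{\Omega_t}$ with $C$ depending only on the diameter). The paper's method has the advantage of staying entirely within the Poisson/Green machinery already set up, whereas yours trades the Bochner--Martinelli input and the coarea decomposition for a cleaner PDE fact. Both proofs share the same implicit hypothesis that $f(\cdot,t)$ has $L^2$ boundary values on $\partial\Omega_t$ (otherwise the right-hand side is infinite and there is nothing to prove); in the paper's application $f$ is in fact smooth on $\overline{\Omega}$, so the point is moot.
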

\begin{proof}
Let $\Omega_t^\epsilon:=\{z'\in\Omega_t: \delta_{\Omega_t}(z')>\epsilon\}$ for $\epsilon>0$. On the one hand, it is easy to see that there exist $\epsilon_0,r_0>0$ independent of $t$ so that $\Omega_t^{\epsilon_0}$ contains all centers of internally tangent balls with radius $r_0$. For $z'\in\Omega_t^{\epsilon_0}$ and $\zeta\in\partial\Omega_t$, it follows easily from the Bochner-Martinelli formula (compare  \cite{Chen}, the proof of Theorem 1.6)  that  
\begin{eqnarray*}
|f(z',t)|^2\leq C\int_{\partial\Omega_t}|f(\cdot,t)|^2dS',
\end{eqnarray*}
where $C$ is independent of $t$.
On the other hand, for $\epsilon<\epsilon_0$ and $z'_0\in\Omega^{\epsilon_0}_t$ fixed, a similar argument as the proof of Lemma \ref{le:1} yields
\begin{eqnarray*}
\int_{z'\in\partial\Omega_t^\epsilon} |f(z',t)|^2 dS'\leq C \int_{z'\in\partial\Omega_t}|f(z',t)|^2dS'.
\end{eqnarray*}
Hence
\begin{eqnarray*}
\int_{z'\in\Omega_t}|f(z',t)|^2dV' &=& \int_{\{z'\in\Omega_t:\delta_{\Omega_t}(z)>\epsilon_0\}}|f(z',t)|^2dV'+\int_0^{\epsilon_0} \int_{z'\in\partial\Omega_t^\epsilon}|f(z',t)|^2dS'd\epsilon\\
&\leq & C\int_{z'\in\partial\Omega_t}|f(z',t)|^2dS'.
\end{eqnarray*}
\end{proof}
 In order to prove Theorem \ref{th: localization}, we also need the following lemma.
\begin{lemma}\label{le:3}
Let $\Omega \subset \mathbb{C}^n$ be a smooth bounded domain satisfying HTTP at $z^0\in\partial\Omega$, where $z_0$ is a local holomorphic peak point. Set $L_t:=\{z=(z',z_n)\in\C^n,z_n=t\}$ for $t\in\C$ and $L(R):=\bigcup_{t\in\Delta(z_n^0,R)} L_t$ for any $R>0$, where $\Delta(t,R)$ denotes the disc with center $t$ and radius $R$. Then there exist $a_1>a_2>0$ such that $\left(L(a_1)\setminus L(a_2)\right) \cap \Omega$ is non-empty and for any $f\in h^2(\Omega)$, we have
$$
\int_{(L(a_1)\setminus \overline{L(a_2)})\cap \partial\Omega}|f|^2 dS\approx\int_{t\in\Pi((L(a_1)\setminus \overline{L(a_2)})\cap \partial\Omega)}dV_t\int_{\partial( \Omega\cap L_t)}|f(z',t)|^2 dS',
$$
where $\Pi:\C^n\rightarrow\C$ denotes projection $(z',z_n)\mapsto z_n$.
\end{lemma}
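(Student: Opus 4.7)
The strategy is to view the asserted relation as the smooth coarea formula applied to the projection $\Pi\colon\partial\Omega\to\C$, $(z',z_n)\mapsto z_n$, on a suitable compact piece of $\partial\Omega$, with HTTP playing the role of controlling the Jacobian uniformly above and below.

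To choose the radii, let $r>0$ be the HTTP radius of Definition \ref{de:1}. Since the outward normal at $z^0=0$ is $(0',1)$, points of the form $(0',-\delta)$ lie in $\Omega$ for all sufficiently small $\delta>0$; I fix such a $\delta<r/2$ and set $a_2=\delta/2$, $a_1=\min(2\delta,r)$. Then $(L(a_1)\setminus\overline{L(a_2)})\cap\Omega$ contains $(0',-\delta)$ and is non-empty, while the compact set $K:=\overline{(L(a_1)\setminus\overline{L(a_2)})\cap\partial\Omega}$ sits inside $\partial\Omega\cap\{0<|z_n|<r\}$, i.e.\ inside the HTTP disc but strictly away from $z^0$. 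Writing $N=\nabla\rho/|\nabla\rho|=(N',N'')\in\R^{2n-2}\times\R^{2}$ for the unit outward normal of $\partial\Omega$, a short linear-algebra computation (project the $z_n$-plane basis onto $T\partial\Omega=N^{\perp}$ and take the square root of the Gram determinant) identifies the Jacobian of $\Pi|_{\partial\Omega}$ as
$$J_\Pi(z)=|N'(z)|=\frac{|\nabla_{z'}\rho(z)|}{|\nabla\rho(z)|}.$$
By HTTP, $\rho|_{L_t}$ is a defining function of $\Omega_t$ for every $|t|<r$, which is exactly the statement that $\nabla_{z'}\rho$ does not vanish on $\partial\Omega\cap L_t$; by smoothness of $\partial\Omega$ and compactness of $K$ away from $z^0$, there are constants $0<c\le C<\infty$ with $c\le J_\Pi\le C$ on $K$. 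The smooth coarea formula then gives
$$\int_{K}|f|^{2}\,dS=\int_{\Pi(K)} dV_{t}\int_{\partial\Omega_{t}}\frac{|f(z',t)|^{2}}{J_\Pi(z',t)}\,dS',$$
and combining with the two-sided bound on $J_\Pi$ yields the desired comparability.

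The main obstacle is securing the \emph{uniform} positive lower bound on $|\nabla_{z'}\rho|$ over $K$. At $z^{0}$ itself the outer normal is purely in the $z_n$-direction, so $\nabla_{z'}\rho(z^{0})=0$ and $\Pi|_{\partial\Omega}$ genuinely degenerates there; this forces the requirement $a_2>0$, which excises a neighborhood of $z^{0}$ from $K$, and $a_1<r$, which keeps $K$ inside the HTTP zone. Within this annular shell, HTTP promotes the slicewise non-vanishing of $\nabla_{z'}\rho$ on each $\partial\Omega_{t}$ into a uniform positive lower bound on the compact set $K$, which is precisely what the coarea identity needs to deliver the two-sided estimate; the peak-point hypothesis is not used directly in this lemma (it enters only through the HTTP data).
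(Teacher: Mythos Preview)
Your argument is correct, and the key identification $J_\Pi=|\nabla_{z'}\rho|/|\nabla\rho|$ together with the compactness argument giving two-sided bounds on $J_\Pi$ over $K$ is exactly the right mechanism. Your observation that the peak-point hypothesis is not actually used in this lemma is also accurate.

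The paper proves the same comparability by a different route: rather than invoking the smooth coarea formula as a black box, it derives the needed slicing identity by hand. Concretely, the paper writes
\[
\int_K |f|^2 \frac{dS}{|d\rho|}=\lim_{\epsilon\to 0}\frac{1}{\epsilon}\int_U \chi_\epsilon\,|f|^2\,dV,
\]
where $\chi_\epsilon$ is the characteristic function of the thin shell $\{-\epsilon<\rho<0\}$ over the annular slab, then applies Fubini in the product decomposition $dV=dV_t\,dV'$ and passes to the limit in each slice to obtain
\[
\int_K |f|^2 \frac{dS}{|d\rho|}=\int_{\Pi(U)}dV_t\int_{\partial\Omega_t}|f(\cdot,t)|^2\frac{dS_t}{|d\rho_t|}.
\]
Finally HTTP is invoked only to give uniform two-sided bounds on $|d\rho|$ and $|d\rho_t|$, yielding the $\approx$ relation. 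This is of course the same identity your coarea computation produces (with $|d\rho_t|=|\nabla_{z'}\rho|$), so the two proofs are equivalent in content; the paper's version is more self-contained and avoids citing the coarea formula, while yours is shorter and more conceptual, and makes explicit the role of the Jacobian $J_\Pi=|N'|$ and why $a_2>0$ is forced (since $N'(z^0)=0$).
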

\begin{proof}
For the sake of simplicity, we may assume that $z^0=0$. Put
\begin{eqnarray*}
	U=\left(L(a_1)\setminus \overline{L(a_2)}\right)\cap \Omega  \ \ \ \ \text{and} \ \ K=\left(L(a_1)\setminus \overline{L(a_2)}\right)\cap \partial\Omega 
\end{eqnarray*}
 Let $\chi_\epsilon$ be the characteristic function of $\left(L(a_1)\setminus \overline{L(a_2)}\right)\cap \{ \rho(z)>-\epsilon\}$ satisfying 
\begin{eqnarray*}
	\chi_\epsilon| _ {(L(a_1)\setminus \overline{L(a_2)})\cap \{ \rho(z)>-\epsilon\}} =1 \ \ \ \ \ \text{and} \  \ \  \chi_\epsilon| _ {((L(a_1)\setminus \overline{L(a_2)})\cap \{ \rho(z)>-\epsilon\})^c} =0
\end{eqnarray*}
 where $\rho$ is a defining function of $\Omega$ and $\rho_t=\rho | _{\Omega\cap L_t}$. It is not difficult to see that
$$
\int_{K}|f|^2\frac{dS}{|d\rho|}=\lim_{\epsilon\rightarrow 0}\frac{1}{\epsilon}\int_{U}\chi_{\epsilon}|f|^2dV. \ \ \ \ \ \  
$$
Here $dS=\frac{d\rho}{|d\rho|} \;\tikz{\draw[line width=0.5pt](0, 0)--(6pt, 0)--(6pt, 6pt);}\;   \ dV$ and $ \;\tikz{\draw[line width=0.5pt](0, 0)--(6pt, 0)--(6pt, 6pt);}\; $ denotes the contraction operator. By the Fubini theorem, we obtain
\begin{eqnarray*}
	\int_{K}|f|^2\frac{dS}{|d\rho|}&=&\lim_{\epsilon\rightarrow 0}\frac{1}{\epsilon}\int_{t\in\Pi(U)}dV_t \int_{\Omega\cap L_t}\chi_{\epsilon}(\cdot,t)|f(\cdot,t)|^2dV'\\
&=&\int_{t\in\Pi(U)}dV_t\, \lim_{\epsilon\rightarrow 0}\int_{\Omega\cap L_t}\frac{\chi_{\epsilon}(\cdot,t)}{\epsilon}|f(\cdot,t)|^2dV'\\
&=&\int_{t\in\Pi(U)}dV_t \int_{\partial(\Omega\cap L_t)}|f(\cdot,t)|^2\frac{dS_t}{|d\rho_t|}.
\end{eqnarray*} 
Since $\Omega$ satisfies HTTP at $z^0$, it follows that there exists a constant $C>0$ such that
$$
C^{-1}\leq |d\rho|\  \ \ \text{and}\  \ \  |d\rho_t|\leq C.
$$
Hence we have
$$
\int_{K}|f|^2 dS\approx\int_{t\in\Pi(U)}dV_t \int_{\partial(\Omega\cap L_t)}|f(\cdot,t)|^2dS_t.
$$
\end{proof}
%

\subsection{$\bar\partial_b$-operator and $L^2$ existence theorem}   
Let $\Omega\subset {\mathbb C}^n$ be a bounded domain with smooth boundary and $\rho$ a defining function of $\Omega$. Following \cite{ChenShaw}, p. 166, we define the tangential Cauchy-Riemann operator $\bar{\partial}_b$ as follows. Let $\Lambda^{p,q}({\mathbb C}^n)$ denote the set of smooth $(p,q)$ forms on ${\mathbb C}^n$, and $I^{p,q}$ the ideal defined in some neighborhood $U$ of $\partial \Omega$ such that each element of the fiber $I^{p,q}_z$, $z\in U$, can be expressed as follows
   $$
   \rho u_1+\bar{\partial}\rho\wedge u_2
   $$
   for suitable $u_1\in \Lambda^{p,q}({\mathbb C}^n),\, u_2\in \Lambda^{p,q-1}({\mathbb C}^n)$. Let $\Lambda^{p,q}(\partial \Omega)$ be the orthogonal complement of $I^{p,q}|_{\partial \Omega}$ in $\Lambda^{p,q}({\mathbb C}^n)|_{\partial \Omega}$ and ${\mathcal E}^{p,q}(\partial \Omega)$ the space of smooth sections of $\Lambda^{p,q}(\partial \Omega)$ over $\partial \Omega$. Note that ${\mathcal E}^{p,q}(\partial \Omega)$ is essentially the space of $(p,q)$ forms on $\partial \Omega$. Let $\tau:\Lambda^{p,q}({\mathbb C}^n)\rightarrow \Lambda^{p,q}(\partial \Omega)$ denote the map defined by first restricting a $(p,q)$ form $u$ in ${\mathbb C}^n$ to $\partial \Omega$, then projecting the restriction to $\Lambda^{p,q}(\partial \Omega)$ when $q>0$. For $q=0$, $\tau$ is just the standard restriction map. For any $u\in {\mathcal E}^{p,q}(\partial \Omega)$, we pick first $\tilde{u}\in \Lambda^{p,q}({\mathbb C}^n)$ satisfying $\tau \tilde{u}=u$, then define $\bar{\partial}_b u$ to be $\tau \bar{\partial} \tilde{u}$ in ${\mathcal E}^{p,q+1}(\partial \Omega)$. It is easy to see that the definition of $\bar{\partial}_b$ is independent of the choice of $\tilde{u}$.

    Let $L^2(\partial \Omega)$ denote the space of functions on $\partial \Omega$ which are square integrable with respect to the induced metric from ${\mathbb C}^n$ to $\partial \Omega$, and $L^{p,q}_{(2)}(\partial \Omega)$ the space of $(p,q)$ forms on $\partial \Omega$ with $L^2$ coefficients. The operator $\bar{\partial}_b:{\mathcal E}^{p,q}(\partial \Omega)\rightarrow {\mathcal E}^{p,q+1}(\partial \Omega)$ may be extended to a linear, closed, densely defined operator
    $$
    \bar{\partial}_b: L^{p,q}_{(2)}(\partial \Omega)\rightarrow  L^{p,q+1}_{(2)}(\partial \Omega).
    $$
     A function $f\in L^2(\partial \Omega)$ is called a Cauchy-Riemann (C-R) function if $\bar{\partial}_b f=0$ holds in the sense of distributions.

     Now recall the following basic $L^2$ existence theorem of the $\bar{\partial}_b-$operator due to Shaw:

   \begin{theorem}[cf. \cite{Shaw85}, Theorem 9.3.1, Lemma 9.3.7]\label{th:shawtheorem} Let $\Omega\subset {\mathbb C}^n$ be a bounded pseudoconvex domain with smooth boundary. For any $v\in L^{p,n-1}_{(2)}(\partial \Omega)$ satisfying
   $$
   \int_{\partial \Omega} v\wedge \phi=0,\ \ \ \phi\in {\mathcal E}^{n-p,0}(\partial \Omega)\cap {\rm Ker\,}\bar{\partial}_b,
   $$
   there exists $u\in L^{p,n-2}_{(2)}(\partial \Omega)$ such that $\bar{\partial}_b u=v$ on $\partial \Omega$ and $\|u\|_{L^2(\partial \Omega)}\le {\rm const.}\|v\|_{L^2(\partial \Omega)}$. Suppose furthermore that $v$ is smooth, then $u$ is also smooth.
   \end{theorem}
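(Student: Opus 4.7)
The plan is to prove the theorem via Hilbert space duality combined with the $\bar\partial$-Neumann machinery on $\Omega$, which is essentially the approach in Shaw's original paper. By the closed range theorem, the existence of $u\in L^{p,n-2}_{(2)}(\partial\Omega)$ with $\bar\partial_b u=v$ and $\|u\|_{L^2(\partial\Omega)}\le C\|v\|_{L^2(\partial\Omega)}$, subject to the orthogonality hypothesis on $v$, is equivalent to the a priori estimate
\[
\|\phi\|_{L^2(\partial\Omega)} \le C\,\|\bar\partial_b\phi\|_{L^2(\partial\Omega)}
\]
for all $\phi\in \mathrm{Dom}(\bar\partial_b)\cap L^{n-p,0}_{(2)}(\partial\Omega)$ which are orthogonal to the kernel of $\bar\partial_b$, i.e.\ to C-R $(n-p,0)$ forms. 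So the first step is to set up this dual formulation carefully, making sure $\bar\partial_b$ has closed range in the correct bidegree.

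The second step is to transfer this boundary estimate to an interior estimate via Poisson (harmonic) extension. Given $\phi$ on $\partial\Omega$, let $\Phi$ be its harmonic extension to $\Omega$. The boundary $L^2$ norm of $\phi$ is controlled by weighted interior norms of $\Phi$ (using smoothness of $\partial\Omega$ and standard trace inequalities), while $\bar\partial\Phi$ is a $(n-p,1)$ form on $\Omega$ whose tangential trace on $\partial\Omega$ corresponds to $\bar\partial_b\phi$ modulo controllable error terms. The bulk of the routine work is in making these comparisons precise with constants uniform in $\phi$.

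The third step is to invoke H\"ormander's $L^2$ estimate for $\bar\partial$ on the pseudoconvex domain $\Omega$: choosing a strictly plurisubharmonic weight $\psi$, one has for $(n-p,1)$ forms $\alpha\in\mathrm{Dom}(\bar\partial^*)$ in the correct space
\[
\int_\Omega |\alpha|^2 e^{-\psi}\,dV \le C\int_\Omega \left(|\bar\partial\alpha|^2+|\bar\partial^*\alpha|^2\right) e^{-\psi}\,dV,
\]
and, applied via the $\bar\partial$-Neumann projection of $\bar\partial\Phi$, this yields the desired boundary estimate after unwinding the Poisson correspondence. The regularity statement for smooth $v$ would then be inherited from Kohn's global regularity of the $\bar\partial$-Neumann problem on smooth bounded pseudoconvex domains, propagated to $\bar\partial_b$ through the same extension/restriction correspondence.

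The main obstacle I anticipate is the precise matching of the tangential projection $\tau$ and the orthogonal complements on the two sides: one must verify that orthogonality to C-R $(n-p,0)$ forms on $\partial\Omega$ corresponds to the correct orthogonal complement for the $\bar\partial$-Neumann problem at bidegree $(n-p,1)$ on $\Omega$, and that no uncontrolled boundary contributions appear when passing between them. Keeping track of the correct adjoints in the presence of the ideal $I^{p,q}$ used to define $\bar\partial_b$, and verifying $L^2$ boundedness of Poisson extension at the right level, are the key technical hurdles.
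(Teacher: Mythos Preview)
The paper does not give its own proof of this theorem: it is quoted as a known result, with the citation ``cf.\ \cite{Shaw85}, Theorem 9.3.1, Lemma 9.3.7'' (the numbering referring to the Chen--Shaw monograph \cite{ChenShaw}). There is therefore nothing to compare your proposal against in this paper.

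For what it is worth, your outline is in the spirit of Shaw's original argument, though the actual proof in \cite{Shaw85} and \cite{ChenShaw} proceeds somewhat differently in detail. Rather than harmonic extension of boundary data, Shaw works with the weighted $\bar\partial$-Neumann problem on $\Omega$ (with weights $e^{-t|z|^2}$, following Kohn \cite{Kohn73}) and relates solvability of $\bar\partial_b$ at top degree to solvability of $\bar\partial$ with prescribed support via a duality/extension argument; the orthogonality condition to CR $(n-p,0)$-forms arises as the moment condition needed to apply the Bochner--Martinelli/jump formula. Your Step~2 (Poisson extension plus trace estimates) is not the mechanism Shaw uses, and making it rigorous would require some care---in particular, the harmonic extension of $\phi$ does not in general have $\bar\partial\Phi$ with good boundary behavior matching $\bar\partial_b\phi$. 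If you intend to write out a self-contained proof, it would be safer to follow the extension-by-$\bar\partial$ route in \cite{ChenShaw}, Chapter~9, rather than the harmonic-extension route you sketch.
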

\section{Proof of Theorem \ref{th: localization}}

Step 1. Let us first show that 
\begin{eqnarray*}
	\varliminf_{w\rightarrow 0} \frac{S_{\Omega'}(w)}{ S_\Omega(w)} \geq 1.
\end{eqnarray*} 
Let  $B(z, r) $ be the ball in $\C^n$ with center at $z$ and radius $r$. By assumption we may find some $r>0$ with $B(0,r)\cap\Omega\subset\Omega^{\prime}$ and $h \in \mathcal{O} (\Omega \cap B(0,r)) \cap C( \overline {\Omega \cap B(0,r) })$, such that  $h(0) =1$ and 
$$
\sup _{  \Omega \cap (B(0,r) \setminus \overline{B(0, \frac{r}{2})})} |h| <1.
$$
Let $\chi : \C^n \rightarrow [0,1]$ be a smooth function satisfying 
\begin{eqnarray}
\chi \mid _{B(0,\frac r2)} \equiv 1 , \ \ \  \chi \mid _{{B(0,r)} ^c} \equiv 0.
\end{eqnarray} 
Fix a point $w\in\Omega$ which is sufficiently close to $0$. Put 
$$
f(z) = \frac {S_{\Omega} (z,w)}{ \sqrt{S_\Omega (w)}}.
$$
For any positive integer $m$, $\chi \cdot f\cdot h^m$ and $\overline{\partial} \chi \cdot f\cdot h^m$ are smooth on $\Omega'$. We set $v_m = \tau (\overline{\partial} \chi \cdot f\cdot h^m)$. Since $\overline{\partial} (\overline{\partial} \chi \cdot f\cdot h^m) = 0$ holds in $\Omega'$, we infer from Theorem 9.2.5 of \cite{ChenShaw} that
$$
\int_{\partial \Omega'} v_m\wedge \phi=0,\ \ \ \forall\,\phi\in {\mathcal E}^{2,0}(\partial{\Omega'})\cap {\rm Ker\,}\bar{\partial}_b .
$$
By virtue of Theorem \ref{th:shawtheorem}, there exists a smooth solution of the equation $\bar{\partial}_b u_m=v_m$ on $\partial \Omega'$ such that
$$
\int_{\partial \Omega'}|u_m|^2 dS\le C\int_{\partial \Omega'} |v_m|^2dS \leq C\theta ^{2m} \int _{\partial \Omega }|f|^2 dS= C\theta ^{2m},
$$
where $C$ denotes a generic constant depending only on $\Omega$ and 
$$\theta : = \sup _{  \partial\Omega \cap (B(0,r) \setminus \overline{B(0, \frac{r}{2})})} |h|< 1.
$$
  Put $F_m = \chi \cdot f\cdot h^m - u _m$. Then $$\overline{\partial}_b F_m = \overline{\partial}_b(\chi \cdot f\cdot h^m ) - \overline{\partial}_b u_m = \tau (\overline{\partial} \chi \cdot f\cdot h^m) -\bar{\partial}_b u_m =0,$$
i.e., $F_m$ is a $L^{2}$ C-R function on $\partial \Omega'$, which admits a holomorphic extension to $\Omega'$ (still denoted by the same symbol), in view of the Bochner-Hartogs extension theorem. Note that
\begin{eqnarray} \label{eq:estimate1}
	\int _{\partial \Omega'} |F_m - fh^m|^2 dS & \leq & 2 \int_{\partial \Omega'} |1-\chi|^2 |f|^2 |h|^{2m} dS + 2 \int_{\partial \Omega'}  |u_m|^2dS \nonumber \\
	& \leq & C \theta ^{2m}  \int_{\partial \Omega'}  |f|^2 dS + C \theta ^{2m} \ \ \ \  \text {(by  \ \ Lemma \ref{le:0})} \nonumber \\
		& \leq & C \theta ^{2m} .
\end{eqnarray}
Moreover, we have $$ \int_{\partial \Omega'}  |F_m|^2 dS \leq (1+C\theta ^m) ^2.$$ It follows from the definition of the Szeg$\rm{\ddot{o}}$ kernel and (\ref{eq:estimate1})  that 
$$
|F_m(z) -f(z)h^m(z)|^2 \leq C \theta ^{2m} S_{\Omega'} (z), \ \ \ \forall\, z \in \Omega'.
$$
Hence 
$$
|F_m(z)| \geq |f(z)| |h(z)|^m - C \theta ^m \sqrt{S_{\Omega'}(z)}, \ \ \forall  \, z\in \Omega',
$$
so that
\begin{eqnarray}\label{eq:szo}
	\sqrt{S_{\Omega'} (w)} & \geq & \frac { |F_m(w)|}{\left ( \int _{\partial \Omega'} |F_m|^2dS \right)^{1/2}} \nonumber \\
	& \geq &  \frac{	|f(w)||h(w)|^m - C\theta^m \sqrt{S_{\Omega'} (w)} }{1+ C \theta^m},
\end{eqnarray}
that is,
$$
\sqrt{S_{\Omega'}(w)} \geq \frac{|h(w)|^m}{1+2C \theta^m} \sqrt{S_{\Omega} (w)}.
$$
 Hence
 $$
 \varliminf _{w\rightarrow 0} \frac{S_{\Omega'}(w)}{ S_{\Omega}(w)} \geq \frac{1}{(1+2C\theta^m)^2} \longrightarrow 1 \ \ \ (m \longrightarrow \infty).
 $$
 Step 2. It remains to prove 
 \begin{eqnarray*}
 	\varliminf_{w\rightarrow 0} \frac{S_{\Omega}(w)}{ S_\Omega'(w)} \geq 1.
 \end{eqnarray*} 
Put $$f(z)= \frac{S_{\Omega'}(z,w)}{\sqrt{S_{\Omega'}(w)}}.$$
For any positive integer $m$, we define $v_m = \tau (\overline{\partial} \chi \cdot f\cdot h^m)$. Similar as Step 1, we get a solution $u_m$ of the equation $\overline{\partial}_b u_m =v_m$ on $\partial \Omega$  satisfying
 \begin{eqnarray}\label{ineq;um}
 	\int_{\partial \Omega} |u_m|^2dS \leq C \int_{\partial \Omega}|v_m|^2 dS \leq C \theta^{2m}  \int_{\partial \Omega'}|f|^2 dS = C \theta^{2m}.
 \end{eqnarray}
Put $F_m = \chi \cdot f\cdot h^m- u _m$. We may regard $F_m$ as the boundary value of some function in $H^2(\Omega)$, which satisfies
 $$
 \int_{\partial \Omega}|F_m|^2 dS \leq (1+C\theta^m)^2.
 $$ 
 On the other hand, we have
 \begin{eqnarray} \label{ineq:1+2}
 	\int_{\partial \Omega'}|F_m -f \cdot h^m|^2dS & \leq & 2 	\int_{\partial \Omega'}|(1-\chi)f \cdot h^m|^2dS +2	\int_{\partial \Omega'}|u_m |^2dS \nonumber \\
 	& =: &  2\rm{\uppercase\expandafter{\romannumeral1}}_1 +2\rm{\uppercase\expandafter{\romannumeral1}} _2.
 \end{eqnarray}
 Note that
  \begin{eqnarray} \label{eq:I1}
\rm{\uppercase\expandafter{\romannumeral1}}_1 & \leq & C\theta^{2m} 	\int_{\partial \Omega'}|f|^2dS\leq C \theta^{2m},
 \end{eqnarray}
 and
 \begin{eqnarray}\label{eq:I2}
\rm{\uppercase\expandafter{\romannumeral1}} _2 & = & 	\int_{\partial \Omega'}|u_m|^2dS  \nonumber \\
 & = & 	\int_{\partial \Omega'\setminus \partial \Omega }|u_m |^2dS + \int_{\partial \Omega' \cap \partial \Omega }|u_m |^2dS \nonumber \\
 &  =: & \rm{\uppercase\expandafter{\romannumeral2}} _1+ \rm{\uppercase\expandafter{\romannumeral2}} _2,
 \end{eqnarray}
while
\begin{eqnarray}\label{eq:II2}
 \text{$\rm{\uppercase\expandafter{\romannumeral2}} _2$} \leq\int_{\partial\Omega}|u_m|^2 dS \leq  C \theta^{2m}.
\end{eqnarray}
Set $L_t:=\{z=(z',z_n)\in\C^n,z_n=t\}$ for $t\in\C$ and $L(R):=\bigcup_{t\in\Delta(0,R)} L_t$ for $R>0$, where $\Delta(t,R)$ denotes the disc with center $t$ and radius $R$.
By the definition of $\rho'$, there exists $R>0$ such that
$$
|d\rho'(z)|>0, \ \ \ \ \forall\, z\in L(R)\cap\Omega'.
$$
As $\Omega'$ satisfies HTTP at $0$, it follows that there exists $0<R_1<R$ such that $\Omega'_t:=\Omega'\cap L_t$, $t\in\Delta(0,R_1)$, is a smooth domain whenever it is non-empty, and we have
$$
|d_t\rho'(z',t)|>C(r)>0,\ \ \ \ \ \forall\, z\in (L(R_1)\setminus \overline{L(r)})\cap\partial\Omega',
$$
where $r$ is given above.
Then by the continuity of $|d_t\rho'(z',t)|$ there exist $\eta(r)>0$ small enough such that 
$$
|d_t\rho'(z',t)|>0,\ \ \ \ \ \forall\, z\in (L(R_1)\setminus\overline{L(r)})\cap\{\rho'>-\eta(r)\}=:U.
$$
We choose $\kappa\in C_0^\infty(\C)$ such that $\kappa\equiv 0$ in $\Delta(0,R_1)^c$ and $\kappa\equiv 1$ in $\Delta(0,r)$. It is easy to see that there exists $\epsilon_0>0$ such that
$$
|d\rho'(z)|>\epsilon_0|d\kappa(z_n)|, \ \ \ \ \ \forall\, z\in L(R_1)\cap\Omega.
$$
Set $\Omega'':=\{\rho'_{\epsilon_0}:=\epsilon_0\kappa(z_n)+\rho'(z)<0\}\subset\Omega'$ and choose $r$ small enough such that
$$
\rho'(z)>-\epsilon_0,\ \ \ \ \ \forall\, z\in\Omega\cap L(r),
$$  
which implies that $B(0,r)\cap\Omega''$ is empty.
Moreover $\Omega''$ is a smooth domain since $|d\rho'(z)|>\epsilon_0|d\kappa(z_n)|$ for all $z\in L(R_1)\cap\Omega$ and $\kappa(z_n)\equiv 0$ for all $z\in\Omega'\cap L(R_1)^c$.

Note that
$$
\int_{\partial\Omega''\cap\Omega'}|u_m|^2 dS = \int_{\partial\Omega''\cap\Omega'}|F_m|^2 dS=\int_{\partial\Omega''\cap U}|F_m|^2 dS+\int_{\partial\Omega''\cap \Omega'\setminus U}|F_m|^2 dS=:\rm{\uppercase\expandafter{\romannumeral3}} _1+\rm{\uppercase\expandafter{\romannumeral3}} _2.
$$
First of all, we have
\begin{eqnarray*}
\rm{\uppercase\expandafter{\romannumeral3}} _1&\leq&C\int_{t\in \Pi(\partial\Omega''\cap U)}dV_t\int_{\Pi^{-1}(t)\cap (\partial\Omega''\cap U)}|F_m(z',t)|^2d S'\ \ \ \ \text{(by Lemma \ref{le:3})}\\
&\leq&C\int_{t\in \Pi(\partial\Omega''\cap U)}dV_t\int_{\Pi^{-1}(t)\cap\partial\Omega}|F_m(z',t)|^2d S'\ \ \ \ \text{(by Lemma \ref{le:1})}\\
&\leq& C\int_{\partial\Omega''\cap\partial\Omega}|F_m|^2 dS\ \ \ \ \text{(by Lemma \ref{le:3})}\\
&=& C\int_{\partial\Omega''\cap\partial\Omega}|u_m|^2 dS\\
&\leq& C\theta^{2m},
\end{eqnarray*}
where $\Pi:\C^n\rightarrow\C$ denotes projection $(z',z_n)\mapsto z_n$.
Next, since $\partial\Omega''\cap\Omega'\setminus U$ is compact in $(L(R_1)\setminus\overline{L(r)})\cap\Omega'$, it follows from Bergman's inequality that $\forall\, z\in\partial\Omega''\cap \Omega'\setminus U$, 
\begin{eqnarray*}
|F_m(z)|^2 &\leq& C \int_{\Omega'\cap (L(R_1)\setminus\overline{L(r)})}|F_m|^2dV\\
&=&C\int_{t\in \Delta(0,R_1)\setminus \overline{\Delta(0,r)}}dV_t\int_{\Omega_t}|F_m(z',t)|^2dV'\\
&\leq&C\int_{t\in \Delta(0,R_1)\setminus \overline{\Delta(0,r)}}dV_t\int_{\partial\Omega_t}|F_m(z',t)|^2dS'\ \ \ \ \ \text{(by Lemma \ref{le:2})}\\
&\leq&C\int_{\partial\Omega\cap (L(R_1)\setminus \overline{L(r)})}|F_m|^2dS\ \ \ \ \ \text{(by Lemma \ref{le:3})}\\
&=& C\int_{\partial\Omega''\cap\partial\Omega}|u_m|^2 dS\\
&\leq& C\theta^{2m},
\end{eqnarray*}
which implies that $\rm{\uppercase\expandafter{\romannumeral3}} _2$ $\leq C\theta^{2m}$. Finally, we define $\widetilde\Omega:=\Omega\setminus\overline{(\Omega'\setminus\Omega'')}$. Clearly, it is a smooth domain and we have 
\begin{eqnarray}\label{eq:II1}
	\rm{\uppercase\expandafter{\romannumeral2}} _1 & = &	\int_{\partial \Omega'\setminus \partial \Omega }|u_m |^2dS = \int_{\partial \Omega'\setminus \partial \Omega }|F_m |^2dS \nonumber \\
     & \leq &	\int_{\partial \Omega''}|F_m |^2dS \nonumber \\
& \leq &	\int_{\partial \widetilde\Omega}|F_m |^2dS\ \ \ \text{(by Lemma \ \ref{le:0})}\nonumber \\
 &=&  \int_{\partial\widetilde\Omega\cap\Omega'}|F_m |^2dS +	\int_{\partial\Omega''\cap\partial\Omega}|F_m |^2dS \nonumber \\
 &=&  \int_{\partial\Omega''\cap\Omega'}|u_m |^2dS +	\int_{\partial\Omega''\cap\partial\Omega}|u_m |^2dS \nonumber \\
&\leq & C\theta^{2m}. 
\end{eqnarray} 

Combining the formula (\ref{ineq:1+2}), (\ref{eq:I1}), (\ref{eq:I2}), (\ref{eq:II2}), (\ref{eq:II1}) we obtain
 \begin{eqnarray*}
\int_{\partial \Omega'} |F_m - f \cdot h^m |^2 dS \leq C \theta^{2m}.
 \end{eqnarray*}
Again we have
\begin{eqnarray*}
	|F_m(z)-f(z)\cdot h(z)^m| \leq C \theta^m \sqrt{S_{\Omega'} (z)}, \ \ \ \forall\, z \in \Omega.
\end{eqnarray*}
Hence 
\begin{eqnarray}\label{eq:szego1}
	|F_m(z)| \geq |f(z)| \cdot |h(z)|^m - C\theta ^m \cdot \sqrt{S_{\Omega'}(z)}, \ \ \ \forall\, z \in \Omega,
\end{eqnarray}
so that

\begin{eqnarray*}
	\sqrt{S_{\Omega} (w)} & \geq & \frac { |F_m(w)|}{\left ( \int _{\partial \Omega} |F_m|^2dS \right)^{1/2}} \nonumber \\
	& \geq &  \frac{	\sqrt{S_{\Omega'} (w)} ( |h(w)|^m - C\theta^m) }{1+ C \theta^m}.
\end{eqnarray*}
Thus 
\begin{eqnarray*}
	\varliminf_{w\rightarrow 0} \frac{S_{\Omega}(w)}{ S_\Omega'(w)}  \geq \left ( \frac{ 1- C\theta^m}{1+C\theta^m} \right)^2  \rightarrow 1\ \ \ \ \ \ (m \rightarrow \infty).
\end{eqnarray*}



\section{Proof of Theorem \ref{th:exhaustszego}}
First of all, let us prove a lemma as follows
\begin{lemma}\label{szego invarance}
Let $ w:\Omega_1 \rightarrow \Omega_2$ be a biholomorphic mapping between two bounded domains with smooth boundary in $\C^n$ such that $w$ can be extended to a $C^\infty$ homeomorphism between $\overline\Omega_1$ and  $\overline\Omega_2$. If we denote  $\rho_2$ as a definition function of $\Omega_2$, then for $w=w(z)$, 
\begin{eqnarray}
d S_{w} = \lambda(z) dS_{z} \label{s1};\\
\frac{S_{\Omega_1}(z)}{\max_{\partial\Omega_1}\lambda}\leq S_{\Omega_2}(w)\leq\frac{S_{\Omega_1}(z)}{\min_{\partial\Omega_1}\lambda},\label{s2}
\end{eqnarray}
where 
$$\lambda(z):=\frac{|d\rho_1|(z)}{|d\rho_2|(w)}\cdot |J_w(z)|^2,$$
 $J_w$ is the complex Jacobian of $w$ and $\rho_1=\rho_2\circ w$ as a definition function of $\Omega_1$.
\end{lemma}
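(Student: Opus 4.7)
The proof splits naturally into two parts, corresponding to the two displayed identities, and the second part is a routine consequence of the first via the extremal characterization of $S_\Omega$.

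For (\ref{s1}), my plan is to use the formula $dS=\frac{d\rho}{|d\rho|}\intprod dV$ that was already invoked in the proof of Lemma \ref{le:3}. Since $w$ is holomorphic, the real Jacobian of the change of variables $z\mapsto w(z)$ equals $|J_w(z)|^2$, so $w^{*}dV_w=|J_w(z)|^2\,dV_z$. By the very definition $\rho_1=\rho_2\circ w$, we have $w^{*}d\rho_2=d\rho_1$. Pulling back the defining formula for $dS_w$ and factoring out the needed scalars then gives
\begin{eqnarray*}
w^{*}dS_w \;=\; \frac{w^{*}d\rho_2}{|d\rho_2|(w)}\intprod |J_w(z)|^2 dV_z \;=\; \frac{|d\rho_1|(z)}{|d\rho_2|(w)}|J_w(z)|^2\cdot \frac{d\rho_1}{|d\rho_1|(z)}\intprod dV_z \;=\; \lambda(z)\, dS_z,
\end{eqnarray*}
which is exactly (\ref{s1}). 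The only care that needs to be exercised is the contraction identity for a scalar multiple of $d\rho$; this is a routine check from the definition of the interior product.

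For (\ref{s2}) I would use the extremal characterization $S_\Omega(z)=\sup\{|f(z)|^2:f\in H^2(\Omega),\ \|f\|_{h^2}\le 1\}$ recalled in the preliminaries. For any $f\in H^2(\Omega_2)$, the pullback $f\circ w$ is holomorphic on $\Omega_1$, and by (\ref{s1}),
\begin{eqnarray*}
\|f\|_{h^2(\Omega_2)}^2 \;=\; \int_{\partial\Omega_2}|f|^2\,dS_w \;=\; \int_{\partial\Omega_1}|f\circ w|^2\,\lambda(z)\,dS_z,
\end{eqnarray*}
so $\min_{\partial\Omega_1}\lambda\cdot\|f\circ w\|_{h^2(\Omega_1)}^2 \le \|f\|_{h^2(\Omega_2)}^2 \le \max_{\partial\Omega_1}\lambda\cdot \|f\circ w\|_{h^2(\Omega_1)}^2$. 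Applying the sup definition to $f\circ w$ in $\Omega_1$ at the point $z$ yields the upper bound $S_{\Omega_2}(w)\le S_{\Omega_1}(z)/\min_{\partial\Omega_1}\lambda$. The lower bound is obtained symmetrically: given $g\in H^2(\Omega_1)$ with $\|g\|_{h^2(\Omega_1)}\le 1$, the function $g\circ w^{-1}\in H^2(\Omega_2)$ satisfies $\|g\circ w^{-1}\|_{h^2(\Omega_2)}^2\le \max_{\partial\Omega_1}\lambda$, hence $S_{\Omega_2}(w)\ge |g(z)|^2/\max_{\partial\Omega_1}\lambda$, and taking the supremum over $g$ finishes the bound.

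The only subtle step is (\ref{s1}); once it is established, the bounds in (\ref{s2}) are purely formal from the reproducing-kernel sup characterization. I expect the main (minor) obstacle to be the careful bookkeeping in the contraction computation — specifically, verifying that $\frac{w^{*}d\rho_2}{|d\rho_2|(w)}\intprod dV_w$, after pulling $|J_w|^2$ out of the interior product, gives precisely $\lambda(z)\cdot dS_z$ rather than some constant multiple of it. This is just the identity $\alpha\intprod(cV)=c(\alpha\intprod V)$ together with the scaling of $d\rho_1/|d\rho_1|$, but it is the one place where a sign or magnitude error could slip in.
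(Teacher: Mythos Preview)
Your proposal is correct and follows essentially the same approach as the paper: the computation for (\ref{s1}) via $dS=\frac{d\rho}{|d\rho|}\intprod dV$, the pullback identities $w^{*}dV_w=|J_w|^2 dV_z$ and $w^{*}d\rho_2=d\rho_1$, and then the extremal characterization of $S_\Omega$ for (\ref{s2}) are exactly what the paper does. The only cosmetic difference is that for the lower bound the paper simply reapplies the upper-bound argument to the inverse map $z=z(w)$, whereas you phrase it as pushing forward $g\circ w^{-1}$; these are the same computation.
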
 

\begin{proof}
The formula $(\ref{s1})$  follows from the fact that
\begin{eqnarray*}
dS_w&=&\frac{d\rho_2(w)}{|d\rho_2|(w)} \;\tikz{\draw[line width=0.5pt](0, 0)--(8pt, 0)--(8pt, 8pt);}\;
 \ dV_w=\frac{d\rho_1(z)}{|d\rho_2|(w)} \suob \ |J_w|^2 dV_z\\
&=&\frac{|d\rho_1|(z)}{|d\rho_2|(w)} \cdot\ |J_w(z)|^2 dS_z.
\end{eqnarray*}
On the other hand, 
for any $f\in H^2(\Omega_2)$, one have  
$$
\int_{\partial\Omega_2}|f(w)|^2dS_w=\int_{\partial\Omega_1     }|f(w(z))|^2\lambda(z)dS_z.
$$
Then we have
\begin{eqnarray}
S_{\Omega_2}(w)&=&\sup_{f\in H^2(\Omega_2)}\frac{|f(w)|^2}{\|f\|^2_{L^2(\partial\Omega_2)}}=\sup_{f\circ w \in H^2(\Omega_1)}\frac{|f\circ w(z)|^2}{\int_{\partial\Omega_1}|f\circ w(z)|^2\lambda(z)dS_z}\nonumber\\
&\leq&\frac{1}{\min_{z\in\partial\Omega_1}\lambda(z)}\sup_{f\circ w \in H^2(\Omega_1)}\frac{|f\circ w(z)|^2}{\int_{\partial\Omega_1}|f\circ w(z)|^2dS_z}\nonumber\\
&\leq&\frac{S_{\Omega_1}(z)}{\min_{z\in\partial\Omega_1}\lambda(z)}.
\end{eqnarray}
Consider $z=z(w)$ instead of $w=w(z)$, we obtain
$$
S_{\Omega_1}(z)\leq\frac{S_{\Omega_2}(w)}{\min_{z\in\partial\Omega_2}\lambda^{-1}(z(w))}=\max_{z\in\partial\Omega_1}\lambda(z)\cdot S_{\Omega_2}(w),
$$
which completes the proof.
\end{proof}
\begin{proof}[Proof of Theorem \ref{th:exhaustszego}]
We may assume that $z^0=0$. Consider the Taylor expansion series of the definition function $\rho$ of $\Omega$ at $0$
  	$$
  	\rho(z)=2{\rm Re\,} \left (  \sum_{j=1}^2 \frac{\partial \rho}{\partial z_j}(0)z_j+ \frac{1}{2} \sum_{j,k=1}^2 \frac{\partial^2 \rho}{\partial z_j\partial z_k}(0)z_jz_k \right ) + \sum_{j,k=1}^2 \frac{\partial^2 \rho}{\partial z_j\partial \bar{z}_k}(0)z_j\bar{z}_k+O(|z|^3).
  	$$
  	Furthermore, since $d\rho \neq 0$ at $0$, we choose a global coordinate transformation as follows
  	$$
  	w_1 = 2  \sum_{j=1}^2 \frac{\partial \rho}{\partial z_j}(0)z_j+ \sum_{j,k=1}^2 \frac{\partial^2 \rho}{\partial z_j\partial z_k}(0)z_jz_k,
  	$$
  	$$
  	w_2 = z_2,
  	$$
  	then
  	\begin{eqnarray}
  	\rho(w) & = &{\rm Re\,} w_1 +\frac{\partial^2 \rho}{\partial w_2\partial \bar{w}_2}(0)|w_2|^2+O(|w_1||w_2|+|w_1|^2+|w_2|^3)\nonumber\\
  	& = & x_1+ \frac{\partial^2 \rho}{\partial w_2\partial \bar{w}_2}(0)|w_2|^2 + O(|w_1||w_2|+|w_1|^2+|w_2|^3)\label{4.1}.
  	\end{eqnarray}

By Lemma \ref{szego invarance}, it suffices to estimate $S_{\Omega}(w)$. For the sake of simplicity, we still use $z$ instead of $w$.
We claim that there is a constant $c>0$ such that
   \begin{equation}\label{4.2}
   \left\{(z_1,z_2):|z_1-\varepsilon|<\varepsilon/2,|z_2|<r_\varepsilon:=c\varepsilon^{1/3}\right\}\subset {\mathbb C}^2\backslash \Omega
     \end{equation}
   for all sufficiently small $\varepsilon>0$. To see this, we first fix  a point $z$ in the bidisc for a moment. Clearly, we have $x_1>\varepsilon/2$, $|z_1|<\frac32 \varepsilon$ and $|z_2|<r_\varepsilon$, so that
    $$
    \rho(z)\ge 2x_1+\frac{\partial^2 \rho}{\partial z_2\partial \bar{z}_2}(0)|z_2|^2-C(|z_1||z_2|+|z_1|^2+|z_2|^3)\ge \varepsilon-C \left(\frac32 \varepsilon\cdot r_\varepsilon+\frac94 \varepsilon^2+r_\varepsilon^3\right)>0
    $$
    for all sufficiently small $\varepsilon>0$, provided $c$ is sufficiently small, since  $\Omega$ is pseudoconvex at $0$ (so that $\frac{\partial^2 \rho}{\partial z_2\partial \bar{z}_2}(0)\geq 0$). Take $\chi\in C^\infty(\R)$ such that $\chi_{(0,1/2)}\equiv 1$ and $\chi_{(1,+\infty)}\equiv 0.$
Put
     $$
     f_\varepsilon(z_1)=(z_1-\varepsilon)^{-1},\ \ \
    \tilde{f}_\varepsilon(z_1,z_2)=\chi(|z_2|/r_\varepsilon)f_\varepsilon(z_1).
  $$
  Clearly, $\tilde{f}_\varepsilon$ and $\frac1{z_2}\bar{\partial}\tilde{f}_\varepsilon$ are smooth on $\overline{\Omega}$.  Put
   $$
   v_\varepsilon=\tau \left(\frac1{z_2}\bar{\partial}\tilde f_\varepsilon\right).
   $$
   Since $\bar{\partial}\left(\frac1{z_2}\bar{\partial}\tilde f_\varepsilon\right)=0$ holds in $\Omega$, so we infer from Theorem 9.2.5 of \cite{ChenShaw} that
   $$
   \int_{\partial \Omega} v_\varepsilon\wedge \phi=0,\ \ \ \forall\,\phi\in {\mathcal E}^{2,0}(\partial{\Omega})\cap {\rm Ker\,}\bar{\partial}_b.
   $$
   By virtue of Theorem \ref{th:shawtheorem}, there exists a smooth solution of the equation $\bar{\partial}_b u=v_\varepsilon$ on $\partial \Omega$ such that
   $$
   \int_{\partial \Omega}|u|^2 dS\le C\int_{\partial \Omega} |v_\varepsilon|^2dS.
   $$
  By (\ref{4.1}), we may choose a constant $r_0>0$ such that for each $z\in B(0,r_0)\cap \partial \Omega\cap {\rm supp\,}v_\varepsilon$,
   $$|z_1-\varepsilon|^2\gtrsim |y_1|^2+\varepsilon^2,$$
since $|z_1-\varepsilon|\geq \frac{\varepsilon}{2}$ on ${\rm supp\,} v_\epsilon$.
 Thus
  \begin{eqnarray*}
  \int_{B(0,r_0)\cap \partial\Omega}|v_\varepsilon|^2dS
  & \le & \frac{C}{r_\varepsilon^{2}} \int_{\{|y_1|<r_0\}}(\varepsilon^2+|y_1|^2)^{-1}dy_1\int_{\{r_\varepsilon/2<|z_2|<r_\varepsilon\}}|z_2|^{-2}d\lambda_{z_2}
    \le  \frac{C}{\varepsilon r_\varepsilon^{2}}.
  \end{eqnarray*}
  We also have
  $$
  \int_{\partial \Omega-B(0,r_0)}|v_\varepsilon|^2dS\le \frac{C}{r_\varepsilon^{2}}.
  $$
  It follows that $\int_{\partial \Omega}|u|^2dS \le C(\varepsilon r_\varepsilon^2)^{-1}$.
   Put $F=\tilde{f}_\varepsilon-z_2 u$. Clearly, we have
   $$
  \int_{\partial \Omega} |F|^2dS\le 2 \int_{\partial \Omega} |f_\varepsilon|^2dS+2\int_{\partial \Omega} |u|^2dS\le C (\varepsilon r_\varepsilon^2)^{-1},
  $$
  and
   $$
   \bar{\partial}_b F=\bar{\partial}_b \tilde{f}_\varepsilon-z_2 \bar{\partial}_b u=\tau \bar{\partial}\tilde{f}_\varepsilon-z_2 \bar{\partial}_b u=z_2(v_\varepsilon-\bar{\partial}_b u)=0,
   $$
   i.e., $F$ is a smooth C-R function on $\partial \Omega$, which has a extension $\widetilde{F}\in {\mathcal O}(\Omega)\cap C^\infty(\overline{\Omega})$, in view of the Bochner-Hartogs  extension theorem. Since $\widetilde{F}(z_1,0)$ and $\tilde{f}_\varepsilon(z_1,0)$ are holomorphic on $\Omega\cap \{z:|z_1|<r_0,z_2=0\}$ and coincide on $\partial\Omega\cap \{z:|z_1|<r_0,z_2=0\}$, it follows from the uniqueness theorem of F. and M. Riesz that $\widetilde{F}(z_1,0)=\tilde{f}_\varepsilon(z_1,0)=f_\varepsilon(z_1)$ for $|z_1|<r_0$.
  In particular, $\widetilde{F}((-\varepsilon,0))=(2\varepsilon)^{-1}$ for any $\varepsilon$ small enough.

  Thus
  $$
  S_\Omega((-\varepsilon,0))\ge \frac{|\widetilde{F}((-\varepsilon,0))|^2}{\|F\|^2_{L^2(\partial \Omega)}}\ge C{r_\varepsilon^2}/{\varepsilon} \ge C\varepsilon^{-1/3}\ge C|z|^{-1/3},
  $$
and the non-tagential case is similar.

\end{proof} 

\textbf{Acknowledgements.} The authors are grateful to Professor Bo-Yong Chen for introducing us this topic and his consistent guideance. The authors also give thanks to Professor John Erik Fornaes, Yuanpu Xiong for their valuable comments.\\

\end{document}